\numberwithin{equation}{section}
\theoremstyle{plain}
\newtheorem{theorem}{Theorem}[section]
\newtheorem{proposition}[theorem]{Proposition}
\newtheorem{lemma}[theorem]{Lemma}
\theoremstyle{definition}
\newtheorem*{definition}{Definition}
\newtheorem*{claim}{Claim}
\newcommand{\C}{{\mathbb{C}}}
\newcommand{\R}{{\mathbb{R}}}
\newcommand{\Z}{{\mathbb{Z}}}
\newcommand{\N}{{\mathbb{N}}}
\newcommand{\Q}{{\mathbb{Q}}}
\newcommand{\T}{{\mathbb{T}}}
\newcommand{\GL}{\operatorname{GL}}
\newcommand{\SO}{\operatorname{SO}}
\newcommand{\dist}{\operatorname{dist}}
\newcommand{\Lip}{\operatorname{Lip}}
\newcommand{\D}{\operatorname{d}}
\newcommand{\m}{\mathbf{m}}
\newcommand{\overbar}[1]{\mkern 1.5mu\overline{\mkern-1.5mu#1\mkern-1.5mu}\mkern 1.5mu}
\newcommand{\bigslant}[2]{{\raisebox{.2em}{$#1$}\left/\raisebox{-.2em}{$#2$}\right.}}
\begin{document}

\title
[Simultaneous equidistributing and nondense points]
{Simultaneous equidistributing and nondense points for noncommuting toral automorphisms}

\subjclass[2010]{} \keywords{}

\author{Manfred Einsiedler}
\address{ETH Z\"{u}rich, Departement Mathematik, R\"{a}mistrasse 101, 8092 Z\"{u}rich Switzerland}\email{manfred.einsiedler@math.ethz.ch}
\author{Alex Maier}
\address{ETH Z\"{u}rich, Departement Mathematik, R\"{a}mistrasse 101, 8092 Z\"{u}rich Switzerland}\email{alex.maier@math.ethz.ch}

	\thanks{The authors~acknowledge the support by the SNF (Grant 200021-152819).}
	\date{Sept. 2015}
	\begin{abstract}
We show in prime dimension that for two non-commuting totally irreducible toral automorphisms the set of points
that equidistribute under the first map but have non-dense orbit under the second
has full Hausdorff dimension. In non-prime dimension the argument fails only if
the automorphisms have strong algebraic relations. 	\end{abstract}

\maketitle

\section{Introduction}

An important part of the theory of dynamical systems concerns itself with the behaviour of orbits.
In this paper we consider the structure of the set of points with prescribed orbit behaviour for quasi-hyperbolic automorphism $S: \T^d \longrightarrow \T^d$ 
of the $d$-dimensional torus. Taking a point $x \in \T^d$ its orbit $\{S^n x \colon n \in \N_0 \}$ 
can be dense or nondense in $\T^d$. The set of points with dense (non-dense) orbit we denote with $D(S)$
 (resp.\ $ND(S)$). Further we define the set $Eq(S) \subseteq D(S)$ of points whose orbits equidistribute 
on the torus with respect to the Lebesgue measure. 
It is wellknown that $Eq(S)$ has full measure (and so has $D(S)$) and that 
$ND(S)$ is winning (implying in particular that its Hausdorff dimension equals $\dim (ND(S)) = d$). 

Let us introduce a second automorphism $T: \T^d \longrightarrow \T^d$, it is trivial that 
$Eq(S) \cap Eq(T)$ has still full measure, and that $ND(S) \cap ND(T)$ is winning. 
But what can be said about $Eq(S) \cap ND(T)$? Conjecturally this set should be 
dense unless $S=T$ or other strong coincidences between $S$ and $T$ are satisfied. 

In \cite{BET} Bergelson, the first named author and Tseng showed that if $S$ and $T$ are commuting automorphisms of the torus which generate an algebraic $\Z^2$-action without rank one factors and $T$ is hyperbolic, then $\dim (D(S) \cap ND(T)) = d$.
Furthermore,  Lytle and the second named author \cite{LM} showed that for non-commuting $S$ and $T$ with $V_S^{0-} \oplus V_T^{0-} = \R^d$, where $V_S^{-0}$ (and $V_T^{-0}$) denotes the sum of the weak stable eigenspaces of $S$ (and $T$), the intersection has again full dimension. In this paper we want to show that either $\dim (Eq(S) \cap ND(T)) = d$ or that $S$ and $T$ satisfy a strong relationship that sometimes forces the two maps to commute. 

As above $\dim(\cdot)$  will always refer to the Hausdorff dimension.

\begin{theorem} \label{mainthm}
 Let $S,T$ be totally irreducible automorphisms of $\T^d$. Assume that the 
 weak stable subspaces for the two maps $V_{S}^{0-} \neq V_{T}^{0-}$ are different. 
 Then $$\dim (Eq(S) \cap ND(T)) = d.$$
\end{theorem}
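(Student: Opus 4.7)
The plan is to prove the theorem by producing, for every $\eta>0$ and every sufficiently small open ball $U\subset\T^d$, a compact set $K\subset Eq(S)\cap\bigcap_{n\ge 0}T^{-n}(\T^d\setminus U)$ of Hausdorff dimension at least $d-\eta$. Since the avoidance set on the right is contained in $ND(T)$, letting $\eta\to 0$ yields $\dim(Eq(S)\cap ND(T))=d$.

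The hypothesis $V_S^{0-}\neq V_T^{0-}$, equivalently $V_S^+\neq V_T^+$, implies that at least one of the intersections $V_T^+\cap V_S^{0-}$ and $V_S^+\cap V_T^{0-}$ is non-trivial, since otherwise $V_T^+\subseteq V_S^+$ and $V_S^+\subseteq V_T^+$ would force equality. In the friendly subcase I would pick a non-zero $v\in V_T^+\cap V_S^{0-}$ and foliate $\T^d$ by the affine lines $\ell_w=w+\R v$. Because $v\in V_S^{0-}$, each $\ell_w$ lies inside a single weak-stable leaf of $S$, and such a leaf is $Eq(S)$-saturated: two points on the same weak-stable leaf of $S$ have asymptotically coincident $S$-orbits, hence share Birkhoff averages against continuous test functions, so for Lebesgue-a.e.\ $w$ the entire leaf $\ell_w$ is contained in $Eq(S)$. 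Because $v\in V_T^+$, the map $T^n$ expands along $v$ at some rate $\lambda^n>1$, so $\ell_w\cap T^{-n}U$ is a union of short intervals of total linear density $O(\lambda^{-n}\vol(U))$, and a standard expanding-map Cantor construction on $\ell_w$ produces a compact $K_w\subset\ell_w$ of Hausdorff dimension at least $1-\eta$ that avoids the $T$-orbit of $U$. A Marstrand-type slicing assembly then gives $\dim\bigcup_w K_w\ge(d-1)+(1-\eta)=d-\eta$, finishing this subcase.

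The main obstacle is the dual subcase, in which $V_T^+\cap V_S^{0-}=\{0\}$ but $V_S^+\cap V_T^{0-}\neq\{0\}$. Now the chosen $v$ lies in $V_S^+$, the lines $\ell_w$ are $S$-unstable leaves, and $Eq(S)\cap\ell_w$ has full Lebesgue measure on $\ell_w$ but is no longer leafwise saturated, so one cannot immediately conclude that $K_w\cap Eq(S)$ is large. The workaround I would try is to upgrade Lebesgue-genericity to a statement for the natural Frostman measure that the Cantor set $K_w$ carries, via an effective shrinking-target / quantitative equidistribution estimate for $S$ along the $v$-foliation. Proving such an effective bound is the delicate step: it requires enough spectral separation between the actions of $S$ and $T$, and this is precisely where the prime-dimension condition and the exclusion of algebraic coincidences between $S$ and $T$ mentioned in the introduction enter, ensuring that the Cantor-measure estimate does not degenerate.
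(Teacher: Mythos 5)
There are two genuine gaps. First, your case division does not cover the hypothesis. From $V_T^{+}\cap V_S^{0-}=\{0\}$ you cannot conclude $V_T^{+}\subseteq V_S^{+}$: two subspaces of complementary dimension generically intersect trivially without either being contained in the other's complement. Already for two generic non-commuting hyperbolic automorphisms of $\T^2$ the four lines $V_S^{0-},V_S^{+},V_T^{0-},V_T^{+}$ are pairwise distinct, so both of your intersections vanish while $V_S^{0-}\neq V_T^{0-}$, and neither of your subcases applies. The correct dichotomy is $V_S^{0-}\nsubseteq V_T^{0-}$ or $V_T^{0-}\nsubseteq V_S^{0-}$. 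In the first case one takes a line $W\subseteq V_S^{0-}$ with $W\cap V_T^{0-}=\{0\}$; such a $W$ need not lie in $V_T^{+}$, it merely has a nonzero component there, so $T^n$ expands it only after finitely many initial steps and at the rate of the $W$-dominating eigenvalue. With that bookkeeping your first subcase does go through and is essentially the paper's Section 3, with a Cantor construction in place of the Schmidt game.

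Second, in the hard case you assume the key ingredient rather than prove it. The assertion that $Eq(S)\cap\ell_w$ has full Lebesgue measure on \emph{every} line $\ell_w$ in an $S$-expanded direction does not follow from ergodicity: Fubini gives it only for almost every $w$, and upgrading this to every $w$ is the entire content of the paper's Section 4. That argument adapts Chaika--Eskin: one extends $S$ to a skew product $\tilde S$ over a compact rotation group so that the dominating eigenvalue becomes positive, proves an almost-orthogonality estimate showing that for a.e.\ point on the line every weak$^*$ limit of orbit averages is invariant under a one-dimensional eigendirection, and then uses Poincar\'e recurrence together with total irreducibility to force that invariance up to Lebesgue measure on $\T^d$. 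Moreover, once this is available the Frostman-measure difficulty you flag is spurious: in this configuration the line direction lies in $V_T^{0-}$, so $ND(T)$ is saturated along the line; one therefore imposes $ND(T)$ on the transversal (where it is thick) and $Eq(S)$ on the line (where it has full measure), and Marstrand slicing finishes. Finally, the prime-dimension hypothesis plays no role in this theorem at all; it enters only in Theorem~\ref{conjecture}, which concerns the excluded configuration $V_S^{0-}=V_T^{0-}$.
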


As we discuss next the  case $V_{S}^{0-} = V_{T}^{0-}$ is quite special. 

\begin{theorem} \label{conjecture}
 Let $S,T$ be totally irreducible automorphisms of $\T^d$. Assume $V_{S}^{0-} = V_{T}^{0-}$ and that $\gcd(d, \dim V_S^{0-}) = 1$ (which holds e.g.\ if~$d$ is prime), then $S$ and $T$ commute.
\end{theorem}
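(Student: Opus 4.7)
The plan is to pass to the finite-dimensional $\Q$-algebra $A = \Q[S,T] \subseteq \Mat_d(\Q)$ and apply Wedderburn's structure theorem. The coprimality hypothesis $\gcd(d, \dim V_S^{0-}) = 1$, together with the presence of the common real invariant subspace $V := V_S^{0-} = V_T^{0-}$, will force $A$ to be commutative, yielding $ST = TS$.

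Since $S$ is totally irreducible, the characteristic polynomial $p_S \in \Z[x]$ is irreducible over $\Q$, so $\Q[S] \cong K_S := \Q[x]/(p_S)$ is a number field of degree $d$ and $\Q^d$ is one-dimensional over $K_S$. Hence $\Q^d$ is a faithful simple $\Q[S]$-module and \emph{a fortiori} a simple $A$-module. By Schur, $D := \operatorname{End}_A(\Q^d)$ is a division algebra, and Jacobson density gives $A = \operatorname{End}_D(\Q^d)$. Any element of $D$ commutes with $S$ in particular, so $D \subseteq \operatorname{End}_{\Q[S]}(\Q^d) = K_S$, and $D$ is actually a subfield of $K_S$. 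Setting $e = [D:\Q]$ and $f = \dim_D(\Q^d) = d/e$, Wedderburn yields $A \cong \Mat_f(D)$.

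Now I extend scalars to $\R$. Writing $D \otimes_\Q \R \cong \prod_{v \mid \infty} D_v$ with $D_v \in \{\R, \C\}$, I obtain $A \otimes_\Q \R \cong \prod_v \Mat_f(D_v)$ and a decomposition $\R^d \cong \bigoplus_v D_v^f$ as $A \otimes_\Q \R$-modules, each summand $D_v^f$ being simple over the corresponding matrix factor. Because $V$ is preserved by both $S$ and $T$, it is an $A \otimes_\Q \R$-submodule of $\R^d$ and therefore a direct sum of some of the summands $D_v^f$. This forces $k := \dim_\R V$ to be a sum of terms of the form $f \cdot [D_v:\R]$, so $f \mid k$. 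Combined with $f \mid d$ (since $d = ef$), the coprimality assumption $\gcd(d,k) = 1$ yields $f = 1$. But then $A \cong \Mat_1(D) = D$ is itself a field, so the elements $S, T \in A$ commute, as claimed.

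The conceptual core of the argument is the identification of the commutant $D = \operatorname{End}_A(\Q^d)$ as a subfield of the number field $K_S$, which rests on the irreducibility of $p_S$; everything after that reduces to a dimension count over the archimedean completions of $D$. The main subtlety to check carefully is that the \emph{real} subspace $V$ --- which is typically irrational --- still breaks along the semisimple decomposition of $A \otimes_\Q \R$, so that its real dimension is automatically a multiple of $f$; once this is observed, the coprimality hypothesis immediately collapses the Wedderburn block size and makes $A$ commutative.
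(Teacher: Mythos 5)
Your argument is correct, and it takes a genuinely different route from the paper. The paper proceeds via Galois theory on eigenvectors: it first shows (Lemma~\ref{firstlemma}) that a single common eigenvector forces $S$ and $T$ to commute, because the Galois orbit of that eigenvector spans a rational $S$-invariant subspace, which by total irreducibility must be all of $\R^d$, giving a simultaneous diagonalization; it then shows (Lemma~\ref{lemmadivisible}) that the minimal dimension $p$ of a nontrivial common invariant subspace divides both $d$ and the dimension of every other common invariant subspace, by adding the Galois conjugates $\sigma_i(V_0)$ one at a time and observing that each either lands inside the running sum or meets it trivially. The coprimality hypothesis then forces $p=1$, i.e.\ a common eigenvector. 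Your Wedderburn block size $f$ is exactly this $p$ (a simple $A\otimes\overline{\Q}$-module has dimension $f$), and your observation that the real subspace $V$ decomposes along the archimedean places of $D$ is the structural counterpart of the paper's Galois-conjugation step; in both arguments this is the genuine subtlety, since $V_S^{0-}$ is an irrational real subspace and one must justify that its dimension nonetheless respects the rational (or $\overline{\Q}$-rational) decomposition. What your approach buys is a cleaner conclusion: you get commutativity of the whole algebra $\Q[S,T]$ directly from $f=1$, without needing the separate common-eigenvector lemma, and the identification $D\subseteq\operatorname{End}_{\Q[S]}(\Q^d)=K_S$ makes transparent exactly where irreducibility of the characteristic polynomial of $S$ enters. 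What the paper's approach buys is elementarity (only Galois conjugation of eigenvectors, no density or double-centralizer theorems) and a slightly stronger intermediate statement, namely the divisibility of the dimension of \emph{every} common invariant subspace by $p$, which is also what it uses to discuss the $4$-dimensional counterexample. One small point worth making explicit in your write-up: the decomposition $\R^d\cong\bigoplus_v D_v^f$ is multiplicity-free because distinct places correspond to distinct central idempotents of $A\otimes_\Q\R$, so the summands are pairwise non-isomorphic simple modules and every submodule is indeed a sub-sum; this is the fact underlying your claim that $f$ divides $\dim_\R V$.
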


In the case that $S$ and $T$ commute, we refer to the work \cite{BET} of Bergelson, the first named author and Tseng. We also give an example for two explicit $S$ and $T$ acting on $\T^4$ which have $V_{S}^{0-} = V_{T}^{0-}$ and do not commute but nonetheless have strong algebraic relationships. In this case we cannot say anything towards Theorem~\ref{mainthm}. 

\subsection{Outline of paper}

We start in section 2 by recalling some definitions and introducing the notion of $W$-dominating eigenspaces.

In section 3 we start with the assumption $V_{S}^{0-} \nsubseteq V_{T}^{0-}$ 
and show that on a line segment in some direction the set $ND(T)$ is winning,
which enables us to prove one part of the theorem along the lines of \cite{LM}.  

In section 4 we adapt an argument of Chaika and Eskin \cite{CE} to our setting. With this we can show that almost every point on a line segment pushed forward under an extension of $S$ gets invariant under some eigendirection $V$. With some extra work we can turn this invariance into equidistribution on $\T^d$ and conclude the proof using the same argument as in section 3. 

In the last section we show that in the case $V_{S}^{0-} = V_{T}^{0-}$ either $S$ and $T$ commute, or $S$ and $T$ have strong algebraic constraints, of which we give an example. In this example, $S$ and $T$ do not commute, but we have $V_{S}^{0-} = V_{T}^{0-}$. In this case our machinery cannot be applied, and we don't know anything about its orbit structure. Conjecturally the set $Eq(S) \cap ND(T)$ is still dense. Is it $d$-dimensional? It would be interesting to decide the question for this example.

\section{Preliminaries}

\begin{definition}
 A $d$-by-$d$ integer matrix $T$ is called \textit{irreducible} if its characteristic polynomial is irreducible over $\Q$. $T$ is called \textit{totally irreducible} if every power of $T$ is irreducible.
\end{definition}

Throughout the paper we let $T$ (and $S$) be a totally irreducible 
automorphism on $\T^d$ induced by an element of $\GL_d(\Z)$. 
Note that it follows immediately that every eigenvalue of~$T$ (and~$S$) 
has multiplicity equal to one and therefore $T$ and $S$ are diagonalizable over $\C$.
Depending on the context, sometimes $T$ is acting on the torus, and sometimes on $\R^d$. 
While working on the torus we might identify it with any convenient fundamental domain.

\subsection{Definitions, winning sets, and Hausdorff dimension}

We define the following sets related to the $\Z$-action of $T$ on $\T^d$:
\begin{align*}
ND(T) &= \left\{ x\in \T^d \colon \overbar{ \{T^nx\}_{n \in \N_0} } \subsetneq \T^d \right\}, \text{ and}\\ 
Eq(T) &= \left\{ \textstyle x \in \T^d \colon \tfrac{1}{N}\sum_{i=0}^{N-1} f ( T^n x ) \underset{N\to \infty}{\longrightarrow} \int f\, dm ~   \text{ for every } f \in C(\T^d) \right\}
\end{align*}
The first set is the set of points with nondense orbit under $T$, and the second one is the set of points whose $T$-orbits equidistribute.

We recall the notion of winning sets:
In \cite{S}, W. Schmidt introduced  the definition of winning along with the main properties of winning sets.  The game is played on $(X,\dist)$, a complete metric space.  Denote by $B(x,r)$ the closed metric ball around a point $x$ of radius $r$.  
The setup of the two player game is given by two parameters $0<\alpha,\beta<1$ and a set $S\subset X$. 
The game starts with round zero, in which one of the players, let's call him Bob, chooses a ball $B_0 = B(x_0, \rho)$ with $x_0 \in X$ and $0 < \rho \leq 1$.  The first round begins with the other player,  called Alice, choosing a point $y_1$, the center point of a ball $A_1 = B(y_1,\rho\alpha)\subset B_0$.  Bob chooses the next center point of a ball $x_1$ such that $B_1= B(x_1, \rho\alpha\beta) \subset A_1$.  This procedure is iterated with the $n$th round of the game beginning with Alice choosing a point $y_n$ with $A_n=B(y_n,\rho\alpha(\alpha\beta)^{n-1})\subset B_{n-1}$, and continuing with Bob choosing a point $x_n$ satisfying $B_n = B(x_n,\rho(\alpha\beta)^n) \subset A_n$.  At the end of the game we obtain
\[
 \textstyle\bigcap_{n\geq 1} A_n=\textstyle\bigcap_{n\geq 1} B_n=\{x_\infty \}.
\]
 If $x_\infty \in S$, then Alice wins.  If Alice can always find a winning strategy independent of the moves of Bob, the set $S$ is \textit{$(\alpha,\beta)$-winning}.  If there exists $\alpha$ such that $S$ is $(\alpha, \beta)$-winning for all $\beta>0$, then $S$ is an \textit{$\alpha$-winning} set.
As some of the main properties of $\alpha$-winning sets do not depend on the value of~$\alpha>0$,
we also speak simply of {\it winning} sets.

Winning sets have a number of useful properties, in particular in relationship with Hausdorff dimension.  W.~Schmidt showed in \cite{S} that winning sets within $\R^d$ have Hausdorff dimension $d$ (although more general statements exist~\cite{F1,F2}). 
Due to the fact that player Bob starts the game with an arbitrary ball it is clear that a winning set must be dense. In fact a winning set $S\subset\R^d$ is \textit{thick}, i.e.\ for every nonempty open set $U\subset\R^d$ we have $\dim(U \cap S) = \dim(S) = d$.

Another important feature of winning sets is the countable intersection property. As W. Schmidt
showed in \cite{S} for a countable collection $\{S_i\}$ of $\alpha_i$-winning sets with $\inf \alpha_i = \alpha_0 >0$, the intersection $\bigcap_i S_i$ is $\alpha_0$-winning. 

We are going to use Kleinbock and Margulis' version of the Marstrand Slicing Theorem \cite{KM}:

\begin{theorem}\label{thm:MarSlice}
Let $M_1$ and $M_2$ be Riemannian manifolds, $A \subset M_1$, $B \subset M_1 \times M_2$. Denote by $B_a$ the intersection of $B$ with $\{a\} \times M_2$ and assume that $B_a$ is nonempty for all $a \in A$. Then
$$\dim (B) \geq \dim (A) + \inf_{a \in A} \dim (B_a).$$
\end{theorem}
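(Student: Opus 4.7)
The plan is to deduce the slicing inequality from Frostman's lemma combined with a Fubini-type construction of a Borel measure on $B$ whose mass on every ball decays like $r^{s+t}$. First, fix any $s<\dim(A)$ and any $t<\inf_{a\in A}\dim(B_a)$; by monotonicity of Hausdorff dimension it suffices to prove $\dim(B)\ge s+t$ and then let $s\uparrow\dim(A)$ and $t\uparrow\inf_a\dim(B_a)$. Applying Frostman's lemma on $M_1$ yields a compactly supported Borel probability measure $\mu$ on $A$ with $\mu(B(x,r))\le C_1 r^s$ for all $x\in M_1$ and $r>0$. Similarly, for each $a\in A$ one chooses a Borel probability measure $\nu_a$ on $B_a\subset\{a\}\times M_2$ satisfying $\nu_a(B(y,r))\le C(a)\,r^t$, where a priori $C(a)$ may depend on $a$.

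The second step is to uniformize $C(a)$. Using a measurable selection theorem (for instance Kuratowski--Ryll-Nardzewski applied to the closed convex-valued multifunction $a\mapsto\{\nu : \nu\ \text{is a probability measure on }B_a\ \text{with Frostman constant}\le n\}$), one arranges $a\mapsto\nu_a$ to be weakly measurable. By the exhaustion $\{a:C(a)\le n\}\uparrow A$, some $n$ produces a set $A'$ of positive $\mu$-mass. Replacing $\mu$ by its normalized restriction to $A'$ preserves the Frostman bound up to a constant, so one may assume $C(a)\le C_2$ on $\supp\mu$.

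Now define the fibered measure $\lambda$ on $M_1\times M_2$ by
\[
\lambda(E)=\int_{A}\nu_a\bigl(E\cap(\{a\}\times M_2)\bigr)\,d\mu(a),
\]
which is a Borel probability measure supported on $B$. Using the product Riemannian metric, any ball of radius $r$ centred at $(a_0,b_0)$ is contained in $B(a_0,r)\times B(b_0,r)$, hence
\[
\lambda\bigl(B((a_0,b_0),r)\bigr)\le \int_{B(a_0,r)}\nu_a(B(b_0,r))\,d\mu(a)\le C_2 r^t\cdot C_1 r^s = C_1 C_2\,r^{s+t}.
\]
The converse direction of Frostman's lemma (or a direct comparison with $(s+t)$-dimensional Hausdorff measure through a Vitali covering) then gives $\dim(B)\ge s+t$, as required.

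The main obstacle is the uniformization step: one must choose the slice measures $\nu_a$ in a jointly measurable way so that the Fubini-type integral defining $\lambda$ is meaningful, and simultaneously have the Frostman constants $C(a)$ essentially bounded. Both are standard consequences of measurable selection for Polish targets (which Riemannian manifolds are), but they constitute the only genuinely technical input beyond Frostman; once they are in place, the product-metric estimate above closes the argument cleanly.
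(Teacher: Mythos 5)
First, a point of comparison: the paper does not prove this statement at all --- it is quoted from Kleinbock--Margulis \cite{KM}, who in turn rely on the classical Marstrand/Falconer argument --- so your proposal has to be judged against the standard proof rather than against anything internal to the paper.

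Your Frostman-plus-fibred-measure strategy is a legitimate route (it is essentially how one proves the product inequality $\dim(E\times F)\ge\dim E+\dim F$), and your ball estimate for $\lambda$ is correct. The gap is that the entire proof hinges on the step you defer to ``standard measurable selection,'' and that step is neither standard nor justified by the tool you cite. Kuratowski--Ryll-Nardzewski requires the multifunction $a\mapsto\{\nu:\nu$ is a Frostman measure on $B_a$ with constant $\le n\}$ to be \emph{measurable}; that its values are weak$^*$ closed is the easy half, while measurability of the multifunction is exactly what is in doubt, since $a\mapsto B_a$ has no regularity whatsoever --- it is just the slice of a set $B$. To push this through one needs $B$ Borel (so that $(a,\nu)\mapsto\nu(B_a)$ is a Borel map) and then a Jankov--von Neumann type uniformization, which yields only a universally measurable selection; the same issue infects the exhaustion $\{a:C(a)\le n\}$, since $C(a)$ is not even a well-defined function before a selection is made. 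Moreover Frostman's lemma itself requires $A$ and each $B_a$ to be Borel or analytic, whereas the theorem is stated for arbitrary subsets. As written, the argument therefore establishes at best a Borel version of the statement, with its hardest step asserted rather than proved.

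The standard proof sidesteps all of this by using Hausdorff \emph{content} on the fibres, which requires no measures and no measurability there. Fix $s<\dim A$ and $t<\inf_a\dim B_a$, and let $\{U_i\}$ be a cover of $B$ by sets of small diameter, with $\pi_1$ the projection to $M_1$. The slices of the $U_i$ through $\{a\}\times M_2$ cover $B_a$, so
\[
\sum_{i:\,a\in\pi_1(U_i)}|U_i|^t\;\ge\;\mathcal{H}^t_\infty(B_a)\;>\;0
\qquad\text{for every }a\in A.
\]
Writing $A=\bigcup_m A_m$ with $A_m=\{a:\mathcal{H}^t_\infty(B_a)\ge 1/m\}$ and using countable stability of Hausdorff dimension to choose $m$ with $\dim A_m>s$, one integrates the displayed inequality against a \emph{single} Frostman measure $\mu$ on $A_m$:
\[
C\sum_i|U_i|^{s+t}\;\ge\;\sum_i|U_i|^t\,\mu\bigl(\pi_1(U_i)\bigr)
=\int\sum_i|U_i|^t\,\mathbf{1}_{\pi_1(U_i)}(a)\,d\mu(a)\;\ge\;\tfrac{1}{m}\,\mu(A_m)\;>\;0,
\]
whence $\mathcal{H}^{s+t}(B)>0$ and $\dim B\ge s+t$. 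Only one Frostman measure (on the base) is needed and no selection at all. I would recommend either citing the result as the paper does or writing out this covering argument in place of the fibred-measure construction.
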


\subsection{$W$-Dominating eigenspaces} \label{sec:dominating}

Our methods rely on the decomposition of $\R^d$ into eigenspaces. 

In the case of complex eigenvalues we have to substitute them in the following way: Complex eigenvalues always appear in complex conjugated pairs, whose eigenvectors span a complex plane. Intersecting this plane with $\R^d$ leads to a real plane, of which we choose an orthonormal basis. In this sense a \emph{generalized eigenspace} will always mean a real eigenspaces respectively the real two-dimensional subspace corresponding to a pair of complex eigenvalues. We will denote a generalized eigenspace for $T$ by $E_T^{\nu}$,
where~$\nu$ is the eigenvalue.

By $V_T^{0},V_T^{+}$ and $V_T^{-}$ we refer to the sum of the generalized eigenspaces of $T$ which are central (with eigenvalues~$|\nu|=1$), unstable ($|\nu|>1$) respectively stable ($|\nu|<1$) for $T$. By $V_T^{0-} = V_T^{0} \oplus V_T^{-}$ we define the subspace which does not get expanded by $T$ and will refer to it as the \emph{weak stable subspace}. 

We fix a one-dimensional subspace $W \subset \R^d$ with $W \cap V_T^{0-} = \{0\}$ which is not contained in the weak stable subspace.

Taking any vector $w\in W \backslash \{0\}$, we can write it as the sum $w = \sum_{\nu} w_\nu$ of some generalized eigenvectors (some of them may get contracted or be central). Since $W$ is not completely contained in the weak stable subspace, there has to be some component $w_\nu \neq 0$ of an expanding eigendirection ($\lvert \nu \rvert > 1$) of $W$. Let $\nu_0$ be an eigenvalue with biggest absolute value $\lambda = \lvert \nu_0 \rvert$ among those eigenvalues $\nu$ with $w_{\nu} \neq 0$.

We write $w = w_{\lambda } + w_{<\lambda }$ where $w_{\lambda } = \sum_{\lvert \nu \rvert = \lambda } w_\nu$ and $w_{<\lambda } = \sum_{\lvert \nu \rvert < \lambda } w_\nu$. 
Furthermore we define the \textit{W-dominating~eigenspaces}
\[
 W_{\max} = \bigoplus_{\lvert \nu \rvert = \lambda, w_\nu \neq 0} E_T^{\nu},
\]
and the subspace
\[ 
 W_{<\max} = \bigoplus_{\lvert \nu \rvert < \lambda} E_T^{\nu}
\]
so that~$w_{\lambda}\in W_{\max}$ and~$w_{<\lambda}\in W_{<\max}$. 
Finally we let 
\[
 \eta = \max \{ \lvert \nu \rvert : \lvert \nu \rvert < \lambda \text{ and } w_\nu \neq 0 \}
\]
denote the second largest absolute value of an eigenvalue contributing to $W$.

\section{Winning}

Here we want to prove that the set of nondense points ND(T) is winning on any line segment not parallel to the weak stable subspace of $T$. This enables us to prove the following part of our main theorem:

\begin{theorem} \label{thm:win}
 Let $S,T$ be totally irreducible automorphisms of $\T^d$. Assume $V_{S}^{0-} \nsubseteq V_{T}^{0-}$. Then $\dim( Eq(S) \cap ND(T)) = d$.
\end{theorem}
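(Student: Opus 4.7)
The plan is to pick a one-dimensional direction $W$ that is simultaneously weakly stable for $S$ and has a non-trivial expanding $T$-component, then combine a Schmidt-game winning statement for $ND(T)$ along $W$-lines with the translation invariance of $Eq(S)$ along $W$, and conclude via Marstrand slicing (Theorem~\ref{thm:MarSlice}). For the choice of direction, recall that $V_S^{0-}$ is a direct sum of generalized $S$-eigenspaces, all of multiplicity one by total irreducibility. Since $V_S^{0-}\not\subseteq V_T^{0-}$, at least one such generalized $S$-eigenspace with $|\nu_S|\le1$ fails to lie in $V_T^{0-}$; we pick a one-dimensional real subspace $W$ inside it. Being one-dimensional, $W\cap V_T^{0-}=\{0\}$, so $W$ fits the setup of Section~\ref{sec:dominating} with a non-trivial $T$-expanding component, while still $W\subseteq V_S^{0-}$. (The edge case where every candidate $S$-eigenvalue has modulus exactly one requires the rotational version of the equidistribution-invariance argument below and is handled by a minor variant that we do not spell out here.)

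The main technical step is to show that $ND(T)\cap\ell$ is $\alpha$-winning along every affine $W$-segment $\ell=x_0+[-\rho,\rho]w\subset\T^d$, with $\alpha>0$ independent of $x_0$ and $\rho$. We follow the method of~\cite{LM}: fix a countable dense family $\{p_k\}\subset\T^d$ and neighborhood scales $c_k>0$; by the countable intersection property of $\alpha$-winning sets it suffices that Alice force the Schmidt-game limit point $t_\infty$ to satisfy $T^n(x_0+t_\infty w)\notin B(p_k,c_k)$ for some $n=n_k$ and every $k$. In a given round, the $W$-dominating eigenspace decomposition of Section~\ref{sec:dominating} says that after $n$ applications of $T$ the segment $T^n\ell$ expands by roughly $\lambda^n$ in the $W_{\max}$-direction, with sub-dominant perturbation of relative size $(\eta/\lambda)^n\to 0$. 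Alice chooses $n$ so that her current ball, of length $r=\rho(\alpha\beta)^k$, maps to a segment of unit length along $W_{\max}$; she then removes a short sub-ball covering the $T^{-n}$-preimage of $B(p_k,c_k)$ and hands Bob a proportionally sized ball disjoint from it. Tracking the shrinkage against $(\alpha\beta)^n$ and using the gap between dominating and sub-dominating scales of $T$ on $W$ yields $\alpha$-winning.

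For the final assembly, since $W$ lies in a single $S$-eigendirection of modulus $<1$, the identity $S^n(x+tw)=S^nx+t\nu_S^n w$ together with $\nu_S^n\to 0$ imply that the Ces\`aro averages $\tfrac1N\sum_{n<N}f(S^n(x+tw))$ and $\tfrac1N\sum_{n<N}f(S^nx)$ differ by $o(1)$ for every $f\in C(\T^d)$, so $Eq(S)$ is invariant under translations along $W$. Because $Eq(S)$ has full Haar measure on $\T^d$, Fubini in $W$-adapted local coordinates upgrades this to $Eq(S)\cap\ell=\ell$ for Lebesgue-almost every affine translate $\ell$ of $W$. On every such $\ell$ the intersection $Eq(S)\cap ND(T)\cap\ell=ND(T)\cap\ell$ is $\alpha$-winning and so has Hausdorff dimension $1$, and Theorem~\ref{thm:MarSlice} applied with a transversal $(d-1)$-disk as $M_1$ and $W$ as $M_2$ gives $\dim(Eq(S)\cap ND(T))\ge(d-1)+1=d$. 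The opposite inequality is trivial. The main obstacle is the Schmidt-game bookkeeping of the middle paragraph, where the control of the sub-dominant $T$-eigenvalues on $W$ via the decomposition of Section~\ref{sec:dominating} is essential for the uniformity of Alice's strategy in $x_0$ and $\rho$.
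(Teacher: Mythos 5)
Your overall architecture matches the paper's proof of Theorem~\ref{thm:win}: a line direction $W\subseteq V_S^{0-}$ transverse to $V_T^{0-}$, a Schmidt-game winning statement for $ND(T)$ along $W$-segments, translation invariance of $Eq(S)$ along $W$ combined with ergodicity/Fubini on a transversal, and Marstrand slicing. However, the middle step --- the winning argument --- has a genuine logical gap. You propose to force the limit point to satisfy $T^{n_k}(x_0+t_\infty w)\notin B(p_k,c_k)$ ``for some $n=n_k$ and every $k$.'' Non-density requires an open set that the orbit avoids at \emph{all} times, not a family of balls each avoided at \emph{one} time; the condition as you state it is satisfied by essentially every point and implies nothing about the orbit closure. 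Correspondingly, your round-by-round strategy (delete the $T^{-n}$-preimage of $B(p_k,c_k)$ in round $k$) only protects the single time $n$; you never explain why the orbit stays away from a fixed ball at the times between consecutive rounds, nor at the finitely many initial times. The paper's Proposition~\ref{prop:win} fixes a single target (the fixed point $0$), arranges that at a sequence of times $k_0\leq k_1\leq\cdots$ with \emph{bounded gaps} the orbit point is at distance at least $\tfrac16\lambda^{-1}$ from $0$, and then uses continuity of $T$ over the bounded gaps (plus the fact that $0$ is fixed, for the times $k<k_0$) to get uniform avoidance at all times. Your sketch is missing exactly this bounded-gap-plus-continuity mechanism; note also that with your quantifiers the countable intersection property is not even needed, since avoiding one single $B(p_k,c_k)$ at all times already gives non-density.

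Two further points. First, you restrict $W$ to a single generalized $S$-eigenspace so that $S^n(x+tw)=S^nx+t\nu_S^nw$ with $\nu_S^n\to0$; this fails precisely when the only generalized $S$-eigenspaces of $V_S^{0-}$ not contained in $V_T^{0-}$ are central ($\lvert\nu_S\rvert=1$), a case you acknowledge but do not handle. The paper avoids the issue by invoking Lemma~\ref{lem:foliation} (i.e.\ \cite[Lemmas 4.3 and 4.7]{LM}), which gives invariance of $Eq(S)$ under translation by any $v\in V_S^{0-}$, central directions included; without that input your argument is incomplete. Second, a one-dimensional $W$ inside a two-dimensional generalized eigenspace $E\nsubseteq V_T^{0-}$ does not automatically satisfy $W\cap V_T^{0-}=\{0\}$ (the intersection $E\cap V_T^{0-}$ could be a line containing $W$); you must choose $W$ spanned by a vector outside $V_T^{0-}$, which is of course possible. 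The final assembly (full measure of $Eq(S)$ on the transversal, thickness of the winning set on each fibre, then Theorem~\ref{thm:MarSlice}) is correct and coincides with the paper's.
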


\subsection{The winning property along line segments}

Let $T$ be a totally irreducible automorphism of the $d$-dimensional torus. 

We will use the supremum norm $\lVert \cdot \rVert = \lVert \cdot \rVert_\infty$ on $\R^d$ defined using the coordinates provided by the generalized eigenspaces.
Furthermore we normalize the norm such that the  ball of radius 2 is mapped injectively
and isometrically into $\T^d$.

Let $W$ be a one-dimensional subspace which is not contained in the weak stable subspace $V_T^{0-}$, i.e.\ $W \cap V_T^{0-} = \{0\}$ and vectors in~$W$ are eventually expanded (but $W$ may not be invariant under $T$). For $x \in \T^d$, let $A_{0}$ be an interval inside $x+W$ of finite length. We define 
\[
 ND_0(T)= \bigl\{y \in \T^d \mid 0 \notin \overbar{\{T^n y, n \in \N\}} \bigr\} \subseteq ND(T),
\]
i.e.~$ND_0(T)$ is the set of points whose forward orbit under~$T$ avoids $0$.

\begin{proposition}\label{prop:win}
 $ND_0 (T)\cap A_0$ is $\tfrac{1}{3}$-winning as a subset of $A_{0} \subset x+W$.
\end{proposition}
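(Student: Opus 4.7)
The plan is to construct a $(\tfrac{1}{3},\beta)$-winning strategy for Alice valid for every $\beta\in(0,1)$, by forcing the outcome $x_\infty$ to satisfy $T^k x_\infty\notin U$ for every $k\ge 1$, where $U=B(0,\e)\subset\T^d$ is a fixed small ball; any such $x_\infty$ automatically lies in $ND_0(T)$. Fix a generator $w\in W\setminus\{0\}$; since $W\cap V_T^{0-}=\{0\}$, Section~\ref{sec:dominating} supplies a dominant modulus $\lambda>1$ with $\lVert T^k w\rVert$ comparable to $\lambda^k$ for all large $k$. Pick a length scale $L>0$, to be fixed shortly. At round $n\ge 1$, once Bob plays $B_{n-1}$ of radius $r_{n-1}=\rho(\alpha\beta)^{n-1}$, Alice identifies the integer $k_n$ with $r_{n-1}\lVert T^{k_n}w\rVert\in(L/\lambda,L]$; since $r_n=\alpha\beta\,r_{n-1}$ shrinks geometrically while $\lVert T^k w\rVert$ grows geometrically at rate $\lambda$, every sufficiently large $k$ arises as some $k_n$, and the finitely many small $k$ not captured this way are handled during the first few rounds using uniform continuity of $T^k$ to steer $A_n$ away from $T^{-k}(U)$.

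At round $n$, Alice must select $A_n\subseteq B_{n-1}$ of radius $r_{n-1}/3$ with $T^{k_n}(A_n)\cap U=\emptyset$ in $\T^d$. Geometrically, $T^{k_n}(B_{n-1})$ lifts in $\R^d$ to a line segment of length at most $2L$ in the direction of $T^{k_n}w$. Fixing $L$ and $\e$ so small that $2(L+\e)$ is less than the minimum $\lVert\cdot\rVert_\infty$-norm (in eigenspace-adapted coordinates) of a nonzero vector of $\Z^d$ ensures the $\e$-tube around this lift meets at most one translate $p+B(0,\e)$ with $p\in\Z^d$. Hence the preimage of $U$ inside $B_{n-1}$ under $T^{k_n}$ is at most a single sub-interval of length at most $2\e/\lVert T^{k_n}w\rVert\le 2\e\lambda r_{n-1}/L$. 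Its complement in $B_{n-1}$ consists of two sub-intervals of total length at least $2r_{n-1}(1-\e\lambda/L)$; the longer has length at least $r_{n-1}(1-\e\lambda/L)$, which exceeds $2r_{n-1}/3$ as soon as $\e\le L/(3\lambda)$. Alice places $A_n$ inside this longer sub-interval, which has radius $r_{n-1}/3$ and is contained in $B_{n-1}$ by construction.

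The main obstacle is the scheduling of the $k_n$'s: when $\alpha\beta<1/\lambda$ (for instance when $\beta$ is small), the window $(L/\lambda,L]$ may capture multiple $k$'s at some rounds while missing $k$'s at others, forcing Alice to juggle several constraints per round or to wait for suitable rounds. This is resolved by running a finite number of interleaved sub-strategies, one for each residue class of rounds modulo an integer comparable to $\log\lambda/\log(1/(\alpha\beta))$; a pigeonhole argument on $\log r_{n-1}\bmod\log\lambda$ ensures every $k\ge k_0$ is handled at exactly one round by exactly one sub-strategy. Iterating Alice's moves across all rounds then forces $T^k x_\infty\notin U$ for all $k\ge 1$, so $x_\infty\in ND_0(T)\cap A_0$, establishing that $ND_0(T)\cap A_0$ is $\tfrac{1}{3}$-winning inside $A_0$.
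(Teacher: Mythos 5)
Your single-round mechanics are fine and close in spirit to the paper's: expansion at rate $\lambda$ along $W$ after finitely many steps, a short image segment meeting at most one lattice translate of the target ball, and a complementary sub-interval of relative length $\geq 2/3$ for Alice. The gap is in the scheduling, and the proposed fix does not close it. With $\alpha=\tfrac13$ Alice can exclude at most \emph{one} obstacle per round in the worst case (two obstacles placed at the trisection points of $B_{n-1}$ already leave no admissible sub-interval of radius $r_{n-1}/3$). When $\alpha\beta<\lambda^{-1}$ the constraints arrive at a rate of roughly $\log(1/(\alpha\beta))/\log\lambda>1$ values of $k$ per round, but can be processed at rate one per round; the backlog therefore diverges. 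Interleaving sub-strategies over residue classes of rounds does not help, because the problem is too many constraints per round, not too many families of constraints, and a constraint for a given $k$ cannot be deferred indefinitely: after enough rounds $r_{n-1}\lVert T^k w\rVert$ drops below $\e$ and $T^k(B_{n-1})$ may lie entirely inside a translate of $U$, at which point no move of Alice avoids it. So as written the strategy only works for $\beta$ close to $1$, whereas winning requires all $\beta\in(0,1)$.

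The paper's resolution is different and is the idea your proposal is missing: Alice does \emph{not} attempt to control every time $k$. She controls exactly one time $k_n$ per round (the largest $k$ with $\diam(T^{k_n}B_n)<1$), guaranteeing $\dist_0(T^{k_n}z)\geq\tfrac16\lambda^{-1}$ along the resulting syndetic sequence, whose gaps $k_{n+1}-k_n$ are bounded in terms of $\alpha\beta$ and $\lambda$. Control at the intermediate times then comes for free: if $T^kz$ were close to $0$ for some $k_n<k<k_{n+1}$, then since $0$ is a fixed point and $k_{n+1}-k$ ranges over a finite set, continuity of $T$ would force $T^{k_{n+1}}z$ close to $0$ as well, a contradiction. (The finitely many $k<k_0$ are harmless because $T^{k_0}z\neq 0$ forces $T^kz\neq0$ for all $k\le k_0$, and a finite set of nonzero points does not accumulate at $0$.) The resulting neighborhood of $0$ avoided by the orbit depends on $\beta$, which is permitted since $ND_0(T)$ only asks that \emph{some} neighborhood be avoided. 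You should replace your per-$k$ bookkeeping with this syndetic-times-plus-continuity argument.
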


\begin{proof}
Taking any vector $w\in W$, we decompose it as in \S~\ref{sec:dominating} into the dominating and non-dominating part: 
I.e.~we write $w = w_{\lambda } + w_{<\lambda }$ where $w_{\lambda } = \sum_{\lvert \nu \rvert = \lambda } w_\nu$ and $w_{<\lambda } = \sum_{\lvert \nu \rvert < \lambda } w_\nu$. 
These vectors satisfy
 $$
 \lVert T^k w_{\lambda } \rVert = \lambda^k  \lVert w_{\lambda }\rVert \text{ and } \lVert T^k w_{<\lambda } \rVert \leq \eta^k \lVert w_{<\lambda }\rVert
$$
for all~$k\geq 0$, which leads to
$$\frac{\lVert T^k w_{\lambda } \rVert} {\lVert T^k w_{<\lambda } \rVert} \geq \left(\frac{\lambda } {\eta }\right)^k \frac{\lVert w_{\lambda }\rVert} { \lVert w_{<\lambda }\rVert}.
$$
Since the factor $\tfrac{\lambda} {\eta }>1$ is bigger than $1$, there exists an integer $k_I$ such that $\frac{\lVert T^{k} w_{\lambda } \rVert} {\lVert T^{k} w_{<\lambda } \rVert} \geq 1$ for all $k\geq k_I$. 
Since we are using the supremum norm, for $n\geq k_I$ we then have $\lVert T^n w \rVert=\lVert T^n w_{\lambda } \rVert$ and therefore 
\[
 \lVert T^{n+1} w \rVert = \lVert T^{n+1} w_{\lambda } \rVert = \lambda  \lVert T^{n} w_{\lambda } \rVert = \lambda  \lVert T^{n} w \rVert.
\]
 In other words: apart from the initial $k_I$ steps (where $k_I$ only depends on the direction of $W$), $w$ gets expanded by the factor $\lambda $ in each step.
We will use the metric induced by $\lVert \cdot \rVert_\infty$ on~$\R^d$ as above and define 
\[
 \dist_0(K) = \inf_{k \in K ,n\in\Z^d} \lVert k+n \rVert_\infty
\]
for any subset~$K\subset\R^d$ or $K\subset\T^d$.

\emph{The game.}
We are now going to discuss the Schmidt game for the proof that the 
set $ND_0(T) \cap A_0$ is ($\alpha, \beta$)-winning with $\alpha = \tfrac{1}{3}$ 
and for any $\beta \in (0,1)$.  
Let player B start with choosing a ball $B_0 = B(y_0,\rho)$ inside $A_0$. Without loss of generality we may 
assume $\rho$ is quite small. In fact, we may assume $\rho$ is so small so
 that the diameter of~$T^{k_I}B_0$ has diameter less than~$\lambda^{-1}$. 
For otherwise we may simply apply sufficiently many steps of the game without any particular strategy of A
(which shrinks the interval by a factor~$\alpha\beta$ each time) 
and pretend that the resulting ball was what player B chose initially. 

Given $B_0$ (with this property) we define~$k_0$ to be the largest integer so 
that $T^{k_0} B_0$ has diameter smaller than~$1$. Note that $k_0> k_I$
and that the above discussion implies that~$T^{k_0} B_0$
has diameter at least $\lambda^{-1}$.

We now assume that the game has already been played for $n$ steps, that nested balls
$B_0\supseteq A_0\supseteq B_1\supseteq\cdots\supseteq B_n$ have been chosen,
and that an increasing sequence of integers $k_0\leq k_1\leq \cdots\leq k_{n-1}$
has been constructed inductively. 

\emph{The strategy.}
Given the ball $B_n$  
player A cuts $B_n$ into three subintervals $I_1,I_2,I_3$ 
of equal size and since~$\alpha=\frac13$ player A is allowed
to choose any of the three pieces as the next move $A_n$. 
We define~$k_n$ to be the largest integer so 
that $T^{k_n} B_n$ has diameter smaller than~$1$. Once more 
this implies that~$T^{k_n} B_n$
has diameter at least $\lambda^{-1}$. Now player A
considers the sets $T^{k_n}I_1,T^{k_n}I_2,T^{k_n}I_3$ that together
trisect a line segment within $\T^d$ of size between~$\lambda^{-1}$ and $1$.
Even in the worst case (when $0$ belongs to the center of the second interval)
one of the three pieces, say $I_\ell$, satisfies
\[
  \dist_0(T^{k_n}I_\ell)\geq \tfrac16\lambda^{-1}.
\] 
Player A chooses one such intervals out of the three.

\emph{Winning property.}
Suppose that the game has run its course and we have found the point $z\in\bigcap_nA_n$
and the sequence of integers $k_0\leq k_1\leq\cdots$ by following the above strategy.
We will now show that $0\notin\overbar{\{T^kz\mid k\geq 0\}}$.
First notice that it is clear that 
$T^{k_n}z$ has distance $\geq\frac16\lambda^{-1}$ from $0$ for all $n\geq 0$ and we only have to 
worry about the powers $T^kz$ with $k$ not being of the form $k=k_n$ for some $n$.
There are only finitely many integers~$k\in[0, k_0)$ and since~$T^{k_0}z\neq 0$ and $0$ is a fixed point
these points are not an issue for the desired conclusion. For the remaining integers $k>k_0$ it is important
to note that the sequence~$k_{n+1}-k_n$ is bounded (where the bound only depends on the parameters~$\alpha\beta$ and~$\lambda$). Together with continuity of $T$ this shows that~$z\in ND_0(T)$,
the winning property, and the proposition.
\end{proof}

\subsection{Proof of Theorem~\ref{thm:win}}

For the proof we are going to use that the orbits of two points on the same weak stable manifold have the same behaviour:

\begin{lemma} \cite[Lemma 4.3 and Lemma 4.7]{LM} \label{lem:foliation}
 Let $x \in \T^d$ and let $v\in V_S^{0-}$.  Then $x$ equidistributes under $S$ if and only if $x+v$ equidistributes under $S$. Furthermore we have $x \in ND(S)$ if and only if $x+v \in ND(S)$.
\end{lemma}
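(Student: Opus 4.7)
The plan is to decompose $v = v_- + v_0 \in V_S^- \oplus V_S^0$ and treat the contracting and central summands separately; both equivalences then combine by first translating by $v_-$ and then by $v_0$.

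For the contracting summand $v_- \in V_S^-$, total irreducibility forces $S$ to have simple eigenvalues and hence be diagonalizable over $\C$, so $\lVert S^n v_-\rVert \to 0$ exponentially fast. Uniform continuity of any $f \in C(\T^d)$ then gives
\[
\tfrac{1}{N}\sum_{n=0}^{N-1}\bigl|f(S^n(x+v_-)) - f(S^n x)\bigr| \longrightarrow 0,
\]
establishing the equidistribution equivalence. For nondensity, if $\{S^n x\}_{n\geq 0}$ avoids an open ball $B(y_0,r_0)$, then $\lVert S^n v_-\rVert < r_0/2$ for all $n$ beyond some $N$, so $\{S^n(x+v_-)\}_{n\geq N}$ avoids $B(y_0, r_0/2)$; the finitely many remaining iterates are handled by further shrinking.

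For the central summand $v_0 \in V_S^0$, choose an inner product under which each conjugate eigenvalue pair $e^{\pm i\theta}$ contributes an orthogonal $2$-dimensional rotation block; then $S|_{V_S^0}$ is an isometry and $K := \overline{\{S^n v_0 : n\geq 0\}}$ is a compact $S$-invariant subset of $V_S^0$. The key step is a joint equidistribution statement,
\[
\tfrac{1}{N}\sum_{n=0}^{N-1}\delta_{(S^n x,\, S^n v_0)} \longrightarrow m \otimes \mu
\]
weakly on $\T^d \times K$, where $m$ is Haar on $\T^d$ and $\mu$ is the invariant Kronecker measure on $K$. Pushing forward under the continuous addition $(y,z)\mapsto y+z$ yields equidistribution of $\{S^n(x+v_0)\}$ on $\T^d$, since $m$ is absorbing under convolution. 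The joint convergence reflects Furstenberg disjointness of the mixing system $(\T^d,S,m)$ (mixing is Rokhlin's theorem for ergodic toral automorphisms) from the equicontinuous system $(K, S|_{V_S^0}, \mu)$. For nondensity, one observes $\overline{\{S^n(x+v_0)\}} \subseteq \overline{\{S^n x\}} + K$ and argues that the right-hand side is a proper closed $S$-invariant subset of $\T^d$ whenever the orbit closure of $x$ is.

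The main obstacle is the central case: upgrading measure-theoretic disjointness to a pointwise joint equidistribution for the specific orbit of $x$, and ruling out that $\overline{\{S^n x\}} + K$ fills $\T^d$ even when $\overline{\{S^n x\}}$ is a proper closed subset whose complement contains only small balls. This is carried out in detail in \cite[Lemmas 4.3 and 4.7]{LM}, to which the present lemma refers.
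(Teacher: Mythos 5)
The paper does not actually prove this lemma --- it is quoted verbatim from \cite{LM} --- so the only honest comparison is between your argument and that citation. Your treatment of the stable summand $v_-$ is correct and standard, and your disjointness argument for the equidistribution statement in the central direction is a legitimate route: any weak$^*$ limit of $\tfrac1N\sum_{n<N}\delta_{(S^nx,\,S^nv_0)}$ is a joining of $(\T^d,S,m)$ (first marginal pinned down because $x$ is generic for $m$) with the Kronecker system on $K$ (second marginal pinned down by unique ergodicity of the compact group rotation), and weak mixing of $S$ forces the joining to be the product; pushing forward under addition then works as you say. You should state explicitly that unique ergodicity of $(K,S|_{V_S^0})$ is what fixes the second marginal, but that is a minor omission.

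The genuine gap is the nondensity statement in the central direction, and your proposed route cannot be repaired. The inclusion $\overline{\{S^n(x+v_0)\}}\subseteq\overline{\{S^nx\}}+K$ is true but useless: for a proper closed set $A$ and a compact set $K$ one can perfectly well have $A+K=\T^d$. Indeed, if the complement of $\overline{\{S^nx\}}$ is contained in a ball of radius $\epsilon$ and $K$ has diameter larger than $2\epsilon$, then $\overline{\{S^nx\}}+K=\T^d$, since $\T^d\setminus(A+K)=\bigcap_{k\in K}\bigl((\T^d\setminus A)+k\bigr)$ is then empty. Nothing in the hypotheses prevents nondense orbit closures from being $\epsilon$-dense for $\epsilon$ much smaller than $\operatorname{diam}(K)$, so the claim ``the right-hand side is proper whenever $\overline{\{S^nx\}}$ is'' is false as stated, not merely unproved. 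The sumset bound discards exactly the information that matters, namely the correlation between $S^nx$ and $S^nv_0$; a correct argument has to track the pair jointly, e.g.\ by working with the orbit closure of $(x,\mathbbm{1})$ in $\T^d\times G$ for $G=\overline{\{(S|_{V_S^0})^n\}}$, in the same spirit as your joining argument for the equidistribution half. As written, you have reproduced the easy parts and deferred precisely the hard point back to \cite{LM}, which is also all the paper itself does.
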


As a corollary of this and ergodicity of~$S$ we have the following lemma:
\begin{lemma}\label{lem:measure}
 Let $W \subseteq V_S^{0-}$ be a 1-dimensional subspace, let $W^{\bot}$ be any subspace of $\R^d$ with $W \oplus {W^{\bot}} = \R^d$ and $\m_{W^{\bot}}$ be the Lebesgue measure defined on ${W^{\bot}}$. Then 
for any~$x_1\in\T^d$ we have
\[
 \m_{W^{\bot}} (\{v\in {W^{\bot}}\mid x_1+v\notin Eq(S)\}) = 0.
\]
\end{lemma}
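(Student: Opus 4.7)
The plan is to combine ergodicity of $S$ (which places $Eq(S)$ at full Lebesgue measure on $\T^d$) with the $W$-invariance of $Eq(S)$ guaranteed by Lemma \ref{lem:foliation}, and then extract the one-dimensional conclusion via Fubini on the slicing $\R^d = W \oplus W^{\bot}$.

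First I would note that, since $S$ is a totally irreducible toral automorphism, it is ergodic with respect to the Lebesgue measure $\m$ on $\T^d$, so the Birkhoff ergodic theorem applied to any countable dense family of continuous functions implies $\m(Eq(S)) = 1$. Let $B = \T^d \setminus Eq(S)$, which therefore has $\m(B) = 0$. By Lemma \ref{lem:foliation}, for every $v \in V_S^{0-}$ and every $y \in \T^d$, $y \in Eq(S)$ iff $y + v \in Eq(S)$; since $W \subseteq V_S^{0-}$, the set $B$ is invariant under translation by vectors in $W$. Lifting to $\R^d$, writing $\tilde B = \pi^{-1}(B) \subset \R^d$ where $\pi : \R^d \to \T^d$ is the canonical quotient, $\tilde B$ is a $W$-invariant subset of $\R^d$ with locally zero Lebesgue measure (by countable additivity).

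Next I would fix $x_1 \in \T^d$, choose a lift $\tilde x_1 \in \R^d$, and consider, for an arbitrary compact $K \subset W^{\bot}$ and an interval $I \subset W$ of positive length $L$, the set
\[
\tilde E_{K,I} = \{(v,w) \in K \times I \mid \tilde x_1 + v + w \in \tilde B\}.
\]
Under the identification $\R^d = W^{\bot} \oplus W$, this set has zero Lebesgue measure in $\R^d$, since $\tilde B - \tilde x_1$ inherits local null measure from $\tilde B$. By Fubini, for $\m_{W^{\bot}}$-a.e.\ $v \in K$ the slice $\{w \in I \mid \tilde x_1 + v + w \in \tilde B\}$ has zero $W$-measure. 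However, $\tilde B - \tilde x_1$ is still $W$-invariant, so this slice is either empty or all of $I$; having zero measure forces it to be empty. In particular $\tilde x_1 + v \notin \tilde B$, equivalently $x_1 + v \notin B$, for $\m_{W^{\bot}}$-a.e.\ $v \in K$. Taking a countable exhaustion of $W^{\bot}$ by compact sets yields the claim.

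The argument is essentially a packaged Fubini computation; I do not anticipate a technical obstacle beyond carefully formulating the lift and the $W$-invariance in $\R^d$ rather than directly on $\T^d$ (where $W$-orbits may be dense and the ``slice is empty or full'' dichotomy fails). A minor point worth being explicit about is that Lebesgue measure on $\T^d$ pulled back by $\pi$ agrees locally with Lebesgue on $\R^d$, which is what legitimizes the transition between the null-measure statement for $B$ on the torus and the local null-measure statement for $\tilde B$ used in the Fubini step.
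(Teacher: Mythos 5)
Your proof is correct and follows essentially the same route as the paper's: ergodicity of $S$ gives $\m(Eq(S))=1$, Lemma~\ref{lem:foliation} makes $Eq(S)$ a union of $W$-cosets, and Fubini along the splitting $\R^d = W^{\bot}\oplus W$ (where each $W$-slice of the complement is empty or full) yields the conclusion. You have simply written out in detail the Fubini step that the paper states in one line.
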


\begin{proof}
	 In fact, Lemma~\ref{lem:foliation} shows that for any $v \in {W^{\bot}}$ and $w \in W$ we have $x_1+v \in Eq(S) \Leftrightarrow x_1+v + w \in Eq(S)$. This means that the set $Eq(S)$ equals a union of $W$-cosets.  Ergodicity for~$S$ and Fubini's theorem now gives the lemma.
\end{proof}

\begin{proof}[Proof of Theorem~\ref{thm:win}]
Since $V_{S}^{0-} \nsubseteq V_{T}^{0-}$ we can choose a one-dimensional subspace $W \subseteq V_{S}^{0-}$ such that $W \cap V_{T}^{0-} = \{0\}$. Choose ${W^{\bot}}$ to be any subspace of $\R^d$ such that ${W^{\bot}}\oplus W = \R^d$.
We choose the same norm as in the proof about winning, i.e.\ the supremum norm with respect to the eigenvectors of $T$.

Let $x_1 \in \T^d$ and~$\epsilon\in(0,\frac18)$ be arbitrary. We want to show that $Eq(S) \cap ND(T)$ is thick, i.e.\ its Hausdorff dimension in $B(x_1, 2\epsilon)$ is $d$. Applying Lemma~\ref{lem:measure} we obtain that the set $ \{v\in W^{\bot}\mid x_1+v\in Eq(s)\}$ has full measure as a subset of ${W^{\bot}}$ and in particular 
\begin{equation}\label{largedim}
\dim \bigl(({x_1+(B(0,\epsilon)\cap W^{\bot}}))\cap Eq(S) \bigr)= d-1.
\end{equation}

For any~$x$ let $A_0(x) = x+(B(0,\epsilon) \cap W)$ be the interval of length ${2}\epsilon$ inside $x+W$ containing $x$  to which we can apply Proposition~\ref{prop:win}. Since $ND_0(T) \subseteq ND(T)$ we get 
that for any~$x \in \T^d$ the set $ ND(T) \cap A_0(x)$ is winning as a subset of $A_0(x)$. 

Now we are ready to apply the Marstrand Slicing Theorem (Theorem~\ref{thm:MarSlice}) by setting
\begin{align*}
 M_1 &= {x_1+(B(0,\epsilon)\cap W^{\bot})},\\
 M_2 &= W \cap B(0,\epsilon),\\
A &= M_1\cap Eq(S),\text{ and } \\
B &=  ND(T) \cap Eq(S) \cap (M_1+M_2),
\end{align*}
where we identify~$M_1\times M_2$ with~$M_1+ M_2\subset B(x_1,{2\epsilon})$. 
Then~$\dim A=d-1$ by~\eqref{largedim} and for any~$x\in A$ we have
\[ 
 B_x=B\cap (x+M_2)= ND(T) \cap (x+(W \cap B(0,\epsilon)))
\]
by Lemma~\ref{lem:foliation}. However, since $ND(T) \cap A_0(x)$ is winning and hence thick
as a subset of $A_0(x)$, the same holds for $B_x$ as a subset of $x+M_2$ and it follows that~$\dim B_x=1$ for all~$x\in A$. 
Theorem~\ref{thm:MarSlice} now implies
\[
 \dim (ND(T)\cap Eq(S)\cap B(x_1,2\epsilon))=d
\]
as required.
\end{proof}

\section{Equidistribution}

Here we want to prove that the set of equidistributing points $Eq(S)$ has full measure on any line not parallel to the weak stable subspace of $S$. Then we will prove the second part of our theorem, which is the following theorem:

\begin{theorem} \label{thm:meas}
 Let $S,T$ be totally irreducible automorphisms of $\T^d$. Assume $V_{T}^{0-} \nsubseteq V_{S}^{0-}$. Then $\dim Eq(S) \cap ND(T) = d$.
\end{theorem}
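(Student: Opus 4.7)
The plan is to run the skeleton of the proof of Theorem~\ref{thm:win} but with the choice of direction and the roles of the two maps reversed, using the Chaika--Eskin--style line equidistribution statement from the preceding subsections of Section~4 in place of Lemma~\ref{lem:measure}. From the hypothesis $V_T^{0-}\nsubseteq V_S^{0-}$ I would pick a one-dimensional subspace $W\subseteq V_T^{0-}$ with $W\cap V_S^{0-}=\{0\}$, together with any complement $W^\perp$ of $W$ in $\R^d$. The inclusion $W\subseteq V_T^{0-}$ gives, via Lemma~\ref{lem:foliation} applied to $T$, that $ND(T)$ is invariant under $W$-translations; the condition $W\cap V_S^{0-}=\{0\}$ means that nonzero vectors in $W$ are eventually expanded by $S$, which is the hypothesis needed for the Chaika--Eskin adaptation.

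The key input supplied by the earlier subsections of Section~4 is the following: for every $x_0\in\T^d$, the set $\{w\in W\cap B(0,\epsilon):x_0+w\in Eq(S)\}$ has full one-dimensional Lebesgue measure on the segment, and in particular Hausdorff dimension $1$. This plays the role of Lemma~\ref{lem:measure} in the proof of Theorem~\ref{thm:win}, but for $Eq(S)$ along a direction transverse to $V_S^{0-}$ rather than inside it.

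With this input the Marstrand slicing argument runs in parallel to the proof of Theorem~\ref{thm:win}. Fix $x_1\in\T^d$ and $\epsilon\in(0,\tfrac18)$, let $M_1=x_1+(W^\perp\cap B(0,\epsilon))$ and $M_2=W\cap B(0,\epsilon)$, and set $A=M_1\cap ND(T)$ and $B_x=(x+M_2)\cap Eq(S)\cap ND(T)$ for $x\in A$. Because $x\in ND(T)$ and $M_2\subseteq W\subseteq V_T^{0-}$, the $W$-invariance of $ND(T)$ gives $x+M_2\subseteq ND(T)$, so $B_x=(x+M_2)\cap Eq(S)$, which has Hausdorff dimension $1$ by the Chaika--Eskin input. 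To obtain $\dim A=d-1$, I would apply Theorem~\ref{thm:MarSlice} a second time inside $M_1$: since total irreducibility of $T$ forces $\dim V_T^{0-}<d$, the intersection $V_T^{0-}\cap W^\perp$ is a proper subspace of $W^\perp$, and one can pick a one-dimensional $W'\subseteq W^\perp$ with $W'\cap V_T^{0-}=\{0\}$. Proposition~\ref{prop:win} applied to $W'$ then says that $ND_0(T)$ is $\tfrac13$-winning, hence of dimension $1$, on every $W'$-segment contained in $M_1$; slicing $M_1$ into a $(d-2)$-dimensional base transverse to $W'$ with $W'$-segment fibers, Theorem~\ref{thm:MarSlice} yields $\dim A\geq (d-2)+1=d-1$. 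A final application of the outer Marstrand slicing then gives
\[
 \dim\bigl(Eq(S)\cap ND(T)\cap B(x_1,2\epsilon)\bigr)\;\geq\;(d-1)+1\;=\;d,
\]
which is the desired thickness.

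The main obstacle is the Chaika--Eskin adaptation providing the equidistribution of $Eq(S)$ along $W$-segments, which is the substantive new content over Section~3; once that is in hand, the slicing steps above are essentially structural. A minor verification is that $W'\subseteq W^\perp$ transverse to $V_T^{0-}$ exists, which uses that any totally irreducible toral automorphism in dimension $d>1$ has at least one expanding eigenvalue (else its characteristic polynomial would be cyclotomic, contradicting total irreducibility).
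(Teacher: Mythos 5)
Your argument has the same skeleton as the paper's proof: choose a one-dimensional $W\subseteq V_T^{0-}$ with $W\cap V_S^{0-}=\{0\}$, use Lemma~\ref{lem:foliation} to make $ND(T)$ saturated along $W$-cosets, use Theorem~\ref{thm:poincare} (the Chaika--Eskin adaptation) to get that $Eq(S)$ has full one-dimensional measure, hence dimension $1$, on each $W$-segment, and finish with the Marstrand slicing theorem. The one place you diverge is in establishing $\dim\bigl(M_1\cap ND(T)\bigr)\geq d-1$: the paper invokes Lemma~\ref{lem:winproj}, which combines the globally known thickness of $ND(T)$ with the $W$-saturation and the product-dimension formula, whereas you instead run a second, nested Marstrand slicing inside $M_1$ by picking $W'\subseteq W^\perp$ transverse to $V_T^{0-}$ and applying Proposition~\ref{prop:win} on $W'$-segments. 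Both are correct; your variant has the mild advantage of deriving the needed thickness from the paper's own Proposition~\ref{prop:win} rather than quoting the classical fact that $ND(T)$ is winning in $\T^d$, at the cost of one extra slicing layer. The minor existence check you flag (that a suitable $W'$ exists) is indeed fine: since $W\subseteq V_T^{0-}$ one has $\dim(V_T^{0-}\cap W^\perp)=\dim V_T^{0-}-1<d-1=\dim W^\perp$, so $V_T^{0-}\cap W^\perp$ is proper and a transverse line $W'$ can be chosen.
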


The structure of this section is as follows: 

First we extend the torus and the action of $S$ to $\tilde{S}$ acting on $K \ltimes \T^d$ for some compact group $K$. With this we can change $S$ from having complex eigenvalues to having positive ones, but acting on a bigger space.

Then we adapt an argument of Chaika and Eskin \cite{CE} to our setting. Chaika and Eskin showed on $G / \Gamma$ that a point measure on the orbit of $\SO(2)$ averaged under the geodesic flow is for almost all points on the $\SO(2)$ orbit in the limit also invariant under some unipotent direction. 
Here we show that for almost all points on a line the dirac measure of this point gets invariant under some eigendirection of $\tilde{S}$ when being pushed forward with $\tilde{S}$.

Subsequently we make a change of coordinates to get some invariance of our original map $S$ acting on $X$. Using Poincare recurrence we then can show that for almost every point on a line parallel to $W$ its point measure equidistributes when being pushed forward by $S$.
This enables us to prove the main theorem of this section.

\subsection{Extending the torus and other preparations for the complex case}

Let $S$ be a totally irreducible automorphism of the $d$-dimensional torus. So $S$ is also an automorphism of $\R^d$ on which we will again work with the supremum norm (aligned with the 
generalized eigenspaces of ${S}$).

Let $W \nsubseteq V_S^{0-}$ be a one-dimensional subspace of $\R^d$ with $ W_{\max}$ being its $W$-dominating generalized eigenspace (see Section~\ref{sec:dominating}) with absolute value of the eigenvalues equal to $\lambda > 1$. 

In the case the $W$-dominating eigenvalues are positive, the following construction becomes a lot easier: one then can define $k_S = \mathbbm{1}$ and $K=\{\mathbbm{1}\}$ will be the trivial group, giving $S = \tilde{S}$,  $K \ltimes \R^d = \R^d$, and~$V=W_{\max}$ (which is one-dimensional in this case). In the general case we are going to need more notation which we will introduce now.

For each pair $\{v_{\lambda_j},\overbar{v_{\lambda_j}}\}$ of complex conjugated eigenvectors we define a rotation $K_{\lambda_j}$ as follows: $K_{\lambda_j} v_{\lambda_j} = \frac{\overbar{\lambda_1}}{\lvert \lambda_j \rvert} v_{\lambda_j}$ and $K_{\lambda_j} v_{\overbar{\lambda_j}} = \frac{\lambda_1}{\lvert \lambda_j \rvert} v_{\overbar{\lambda_j}}$ and the identity on the other eigenvectors. We note
that this defines a real matrix that rotates the corresponding generalized eigenspace (with the angle of rotation being the opposite to the angle of rotation for $S$).
For each negative eigenvalue $\lambda_j$ we define $K_{\lambda_j} v_{\lambda_j} = - v_{\lambda_j}$ and the identity on the other eigenvectors.
Let $k_S = \prod_j K_{\lambda_j}$ be the product of all those rotations, where the product is taken over all negative eigenvalues $\lambda_j$ and all pairs of complex conjugated eigenvalues $\{\lambda_j,\overbar{\lambda_j}\}$.

We define the linear map $S_{\mathrm{pos}} = k_S  S=S k_S$ on $\R^d$.
This map has only positive eigenvalues. Some of them might occur with higher geometric multiplicity (in particular when $S$ has several eigenvalues of the same absolute value). 
In particular $W_{\max}$ may have dimension bigger than one. We choose $V$ to be the real one-dimensional eigenspace spanned by $w_{\lambda}$, where $w_{\lambda } = \sum_{\lvert \nu \rvert = \lambda } w_\nu$ is as defined in Section~\ref{sec:dominating}. Note that $w_{\lambda}$ is indeed an eigenvector of~$S_{\mathrm{pos}}$ but in general not of~$S$.

We may refer to~$V$ as the $W$-dominating eigenspace with respect to $S_{\mathrm{pos}}$, which is a one-dimensional eigenspace of $S_{\mathrm{pos}}$ (in the strict sense) and gets expanded by the factor $\lambda > 1$. However, we note
that on the other hand $S_{\mathrm{pos}}$ does in general no longer preserve the lattice $\Z^d$ and hence does not
define a map on~$\T^d$. Therefore we have to extend the torus in the following way:

We define $K \subseteq \GL_d(\R)$ to be the topological group generated by the rotation $k_S$ and notice that $K$ is a compact abelian group.
$S$ and $K$ commute, since they have common eigenspaces.

Both of them act on $\R^d$, so that we can consider the semidirect product
$$  {\langle S \rangle  K \ltimes \R^d}
$$
where we define the multiplication rule 
\[
(S^n  k_1,x) (S^m  k_2, y)=(S^{n+m}  k_1 k_2, x + S^n k_1 y)
\]
 inspired by the usual matrix multiplication 
$\begin{pmatrix}
k_1 & x\\
0 & 1
\end{pmatrix} \begin{pmatrix}
k_2 & y\\
0 & 1
\end{pmatrix} = \begin{pmatrix}
k_1k_2 & k_1y + x\\
0 & 1
\end{pmatrix}$. 
We define $\Gamma = \langle S \rangle \ltimes \Z^d $ and  
$$  X = \bigslant{\langle S \rangle  K \ltimes \R^d}{\langle S \rangle \ltimes \Z^d} 
\cong (K \ltimes \R^d) / \Z^d,$$
where the latter isomorphism is useful for understanding $X$
and the original definition gives us a way of letting $S$ act on $X$.

On $X$ we define a new transformation $\tilde{S}$ by the following formula:
\begin{align*}
	 \tilde{S} (k,x) \Gamma &= (Sk_S,0)(k,x) \Gamma \\
&= 
(S k_S k ,S_{\mathrm{pos}}x)
( S^{-1} , 0) \Gamma = (k_S k, S_{\mathrm{pos}}x) \Gamma 
\end{align*}  
We see that on the second coordinate $S_{\mathrm{pos}}$ is acting on $\R^d$. On the first coordinate we keep track which complex rotation was used to ``straighten
the map $S$ to the map $S_{\mathrm{pos}}$''. 

In order to understand the geometry of $X$ better let us make a few remarks. For any $v\in
\R^d$ the left translation action on our semi-direct product is given simply by
\[
 (\mathbbm{1},v) (k,x)=(k,x+v),
\]
but for any $n\in\Z^d$ the identification modulo $(\mathbbm{1},n)$ (i.e.\ right translation) has the form
\[
 (k,x)(\mathbbm{1},n)=(k,x+kn),
\]
i.e.\ the coordinate $k$ amounts to a rotation of the lattice $\Z^d$.
Finally we claim that
\[
 \Theta:  (k,x)\Gamma\in X\mapsto k^{-1}x+\Z^d\in\T^d
\]
defines a factor map between the transformation $\tilde{S}$ on $X$ and $S$ on $\T^d$. 
Indeed 
\[
 \Theta\bigl((k,x)(\mathbbm{1},n)\Gamma\bigr)=
 \Theta\bigl((k,x+kn)\Gamma\bigr)=k^{-1}(x+kn)+\Z^d=\Theta\bigl((k,x)\Gamma\bigr)
\]
for all $k\in K$, $x\in\R^d$, and $n\in\Z^d$, shows that $\Theta$ is well-defined,
and
\begin{multline*}
  \Theta\bigl(\tilde{S}((k,x)\Gamma)\bigr)=
  \Theta\bigl((k_Sk,S_{\mathrm{pos}}x)\Gamma\bigr)=\\
  k_S^{-1}S_{\mathrm{pos}}k^{-1}x+\Z^d=Sk^{-1}x+\Z^d=S\Theta\bigl((k,x)\Gamma\bigr)
\end{multline*}
for all $k\in K$ and $x\in\R^d$ shows that $\Theta\circ\tilde{S}=S\circ\Theta$. 

\subsection{The Chaika-Eskin  Argument} \label{sec:ChaikaEskin}

In this subsection we adapt the argument by Chaika and Eskin \cite{CE} to our setting. 

The above construction lets us work with the following: We started with $W \nsubseteq V_S^{0-}$, a one-dimensional subspace of $\R^d$ with $V \subseteq W_{\max}$ it's $W$-dominating generalized eigenspace
with respect to $S_{\mathrm{pos}}$. 
Also recall that $\lambda >1$ is the eigenvalue for~ $S_{\mathrm{pos}}$ on~$V$. 
Using the constructed $k_S$ we got $S_{\mathrm{pos}} = S k_S$ which is now an automorphism of $\R^d$ with only positive eigenvalues. Then we constructed the bigger space $X = (K \ltimes \R^d) / \Z^d$ and act from the left with 
$ \tilde{S} $ in the following way:
$ \tilde{S} (k,x) \Gamma = (k_S k, S_{\mathrm{pos}}x) \Gamma $.

Of course we need a metric on the space $X$, which we define as follows: On  $\R^d$ we will use the supremum norm $\left\| \cdot \right\|_\infty$ that we defined earlier. As $K$ is a subset of $(\mathbb{S}^1)^{\times d}$    we take a rotation invariant metric on $(\mathbb{S}^1)^{\times d}$
and restrict it to $K$. The metric~$\D$ on $X$ is defined as the product metric. In particular 
$\D \bigl((k,x),(k,y)\bigr) =  \left\| x - y \right\|_\infty$. 

Let us write $W$ as an additive $1-$parameter group $\{ w_\varphi=\varphi w : \varphi \in \R\}$. To simplify notation, we assume~$\|w\|_\infty=1$ so that $\lvert \varphi \rvert = \lVert w_\varphi \rVert_\infty$. 
We will frequently work with the set $\{w_\varphi : \lvert \varphi \rvert \leq \frac{1}{2} \}$.

For a fixed $x \in X$ and $\lvert \varphi \rvert \leq \frac{1}{2}$ we define the sequence of measures 
\begin{equation}\label{muN}
\mu_{N,\varphi} = \tfrac{1}{N} \sum_{n=0}^{N-1} \tilde{S}^n_* \delta_{(\mathbbm{1}, w_\varphi)x},
\end{equation}
for any $N \in \N$.

Now we are ready to state the main technical result of the section. 

\begin{proposition}\label{prop:invariance}
	For almost every $\varphi \in [-\frac{1}{2},\frac{1}{2} ]$ 
 any weak$^*$ limit $\mu = \mu_{\varphi}$ of 
 the sequence $\mu_{N,\varphi} $ is invariant under  $V$. 
\end{proposition}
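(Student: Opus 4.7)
The plan is to fix $f\in C(X)$ and $v=tw_\lambda\in V$ (for $t\in\R$) and to show that
\[
 g_N(\varphi):=\mu_{N,\varphi}(T_vf)-\mu_{N,\varphi}(f)\to 0
\]
for Lebesgue-a.e.\ $\varphi\in[-\tfrac12,\tfrac12]$, where $T_vf(y):=f((\mathbbm{1},v)y)$. By separability of $C(X)$ (the space $X$ is compact) and of $V$, taking a countable dense family and intersecting full-measure sets will give the proposition. The starting observation is the semidirect-product identity
\[
 \tilde S^n(\mathbbm{1},w_{\varphi+s})x=(\mathbbm{1},S_{\mathrm{pos}}^n w_s)\,\tilde S^n(\mathbbm{1},w_\varphi)x.
\]
Setting $s_n:=t\lambda^{-n}$ and using the decomposition $w=w_\lambda+w_{<\lambda}$ with $S_{\mathrm{pos}}w_\lambda=\lambda w_\lambda$ and $\|S_{\mathrm{pos}}^n w_{<\lambda}\|\leq\eta^n\|w_{<\lambda}\|$, we get $S_{\mathrm{pos}}^n w_{s_n}=v+r_n$ with $\|r_n\|\leq C(\eta/\lambda)^n\to 0$. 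Uniform continuity of $f$ on the compact space $X$ then yields a uniform error $\epsilon_n(\varphi)=o(1)$ satisfying $T_vf(\tilde S^n(\mathbbm{1},w_\varphi)x)=f(\tilde S^n(\mathbbm{1},w_{\varphi+s_n})x)+\epsilon_n(\varphi)$. Averaging over $n<N$ reduces the claim to proving that $h_N(\varphi)\to 0$ for a.e.\ $\varphi$, where
\[
 h_N(\varphi):=\tfrac1N\sum_{n=0}^{N-1}\bigl[f_n(\varphi+s_n)-f_n(\varphi)\bigr],\qquad f_n(\varphi):=f(\tilde S^n(\mathbbm{1},w_\varphi)x).
\]

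The heart of the argument will be the $L^2$-bound $\|h_N\|_{L^2([-1/2,1/2])}^2=O(1/N)$. First approximating $f$ by a $C^2$-function (with uniformly small error), the chain rule and $\|S_{\mathrm{pos}}^n w\|\leq C\lambda^n$ yield $\|f_n'\|_\infty\leq C\lambda^n$ and $\|f_n''\|_\infty\leq C\lambda^{2n}$. The diagonal terms of $\|h_N\|_{L^2}^2$ contribute $O(1/N)$ trivially. For a cross term $\int\psi_n\psi_m\,d\varphi$ with $\psi_k:=f_k(\cdot+s_k)-f_k$ and, without loss of generality, $n<m$, I would apply the change of variable $\varphi\mapsto\varphi-s_m$ (the smaller of the two shifts) in two of the four product terms, producing a boundary error $O(|s_m|\,\|f\|_\infty^2)$ and rewriting the integral as $\int f_m(\varphi)\cdot\Delta_n(\varphi)\,d\varphi$, where
\[
 \Delta_n(\varphi)=f_n(\varphi+s_n-s_m)-f_n(\varphi+s_n)-f_n(\varphi-s_m)+f_n(\varphi)
\]
is a mixed second finite difference with $|\Delta_n|\leq|s_n s_m|\,\|f_n''\|_\infty\leq C|t|^2\lambda^{n-m}$. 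Summing the geometric decay $\lambda^{-|n-m|}$ together with the boundary terms over the $N^2$ pairs gives $\|h_N\|_{L^2}^2=O(1/N)$.

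Given this $L^2$-decay, Chebyshev's inequality and Borel--Cantelli along a subsequence $N_k=\lceil(1+\varepsilon)^k\rceil$ will give $h_{N_k}(\varphi)\to 0$ for a.e.\ $\varphi$; since $\|h_N\|_\infty\leq 2\|f\|_\infty$ and $N_{k+1}/N_k\to 1$ as $\varepsilon\to 0$, an elementary interpolation (sending $\varepsilon$ to $0$ through a countable sequence and intersecting the resulting full-measure sets) promotes this to convergence along the full sequence. Together with the separability reduction above this yields the proposition. The main obstacle is the cross-term estimate: naively $\|\psi_n\|_\infty\leq 2\|f\|_\infty$ and $\|\psi_n\|_{L^1}=O(1)$ since $\|f_n'\|_\infty\sim\lambda^n$, so direct bounds only give $\int|\psi_n\psi_m|\,d\varphi=O(1)$. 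The required geometric decay in $|n-m|$ becomes visible only after carefully exploiting the cancellation obtained by shifting $\varphi$ by an amount tied to the dominating eigenvalue $\lambda$ of $S_{\mathrm{pos}}$, which is the Chaika--Eskin idea adapted to this setting.
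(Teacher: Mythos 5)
Your plan is correct and follows essentially the same route as the paper: the same reduction via the shift $s_n\sim\lambda^{-n}$ tied to the dominating eigenvalue, the same quasi-orthogonality estimate $\int\psi_n\psi_m\,d\varphi=O(\lambda^{-c|n-m|})$, and the same second-moment/Chebyshev/Borel--Cantelli conclusion along a sparse subsequence with interpolation. The only difference is cosmetic: you obtain the cross-term decay by smoothing $f$ to a $C^2$ function and bounding a mixed second finite difference after a change of variables, whereas the paper works directly with Lipschitz functions, partitioning $[-\tfrac12,\tfrac12]$ into intervals of length $\sim\lambda^{-(m+n)/2}$ on which $f_n$ has small average and $f_m$ is nearly constant; both implementations of the cancellation are valid.
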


We note that since the group of elements that preserve a probability measure on a compact space is automatically closed, it suffices to prove the invariance under $v_a$ for any fixed $v_a \in V$.  
To see this e.g.\ choose two elements $v_a$ and $v_b$ that are linearly independent over $\Q$.

Fix some $v_a \in V$.
Fix some $\varphi$, then the invariance of any weak$^*$ limit as in Proposition~\ref{prop:invariance} is equivalent to the following. 
For any 
Lipschitz function $\phi \in \Lip(X)$ we have to show
\begin{equation}\label{mainconvergence}
 \frac{1}{N} \sum_{n=0}^{N-1} \phi \big( 
 (\mathbbm{1},v_a)\tilde{S}^n(\mathbbm{1},w_\varphi)x\big) - \phi \big( \tilde{S}^n(\mathbbm{1},w_\varphi)x\big) \longrightarrow 0 \text{ as } N \rightarrow \infty.
\end{equation}
We note that
\begin{equation}\label{seimpleeq}
(\mathbbm{1},v_a)\tilde{S}^n(\mathbbm{1},w_\varphi)x=
(\mathbbm{1},v_a+S_{\mathrm{pos}}^nw_\varphi)\tilde{S}^nx.
\end{equation}
As before we decompose $w$ into $w= w_{\max} + w_{< \max}$, the first being the projection of $w$ onto $V$. 
Applying $S_{\mathrm{pos}}$ gives $S_{\mathrm{pos}}^n w = \lambda^n w_{\max} + S_{\mathrm{pos}}^n w_{< \max}$. As before let $\eta$ be the second largest eigenvalue occuring in $W$ (see Section~\ref{sec:dominating}). 
Since no eigenvector contributing to $W_{<\max}$ can have an eigenvalue of absolute value bigger than $\eta$ we have $\lVert S_{\mathrm{pos}}^n w_{< \max} \rVert \leq \eta^n \lVert w_{<\max} \rVert$.

For the upcoming bounds we will use the notation $a \ll b$ meaning there is a constant $C = C(W,\phi, v_a....)$ such that $a \leq C b$.
Let $t_n\in\R$ be such that $\lambda^n t_n w_{\max} = v_a$, which implies $|t_n| \ll \lambda^{-n}$. We note that the sequence $t_n$ is independent of $\phi$, $x$ and $\varphi$ and satisfies
\[
 \lVert S_{\mathrm{pos}}^n t_n w - v_a \rVert = \lVert t_n S_{\mathrm{pos}}^n w_{< \max} \rVert \leq \eta^n \cdot \lvert t_n \rvert \ll \left(\frac{\eta}{\lambda}\right)^n.
\]


\begin{figure}[ht]

\begin{tikzpicture}[line cap=round,line join=round,>=triangle 45,x=1.5cm,y=1.7cm]
\clip(-0.41445383543823296,-0.3361344932913753) rectangle (7.065903650627493,3.1680221995257063);
\draw [domain=-0.31445383543823296:7.065903650627493] plot(\x,{(-0.--0.5122117405247799*\x)/0.21670825636195187});
\draw [domain=-0.31445383543823296:6.265903650627493] plot(\x,{(-0.--0.35845429289841046*\x)/5.388549676521646});
\draw [domain=-0.31445383543823296:6.265903650627493] plot(\x,{(-0.--0.8*\x)/5.38});
\draw (0.7059364988094905,3.005647345857368) node[anchor=north west] {$W$};
\draw (0.3425098044198904,0.651880987288011) node[anchor=north west] {$w_{t_n}$};
\draw (6.227226663574569,1.2266902756356426) node[anchor=north west] {$S^n_{\mathrm{pos}}W$};
\draw (4.9831891327794,1.3347564262829964) node[anchor=north west] {$S^n_{\mathrm{pos}}w_{t_n}$};
\draw (5.262748128463707,0.23265274484047185) node[anchor=north west] {$v_a$};
\draw (6.394962060985154,0.5960794392300723) node[anchor=north west] {$V$};
\draw [fill=black] (0.21670825636195187,0.5122117405247799) circle (1.5pt);
\draw [fill=black] (5.388549676521646,0.35845429289841046) circle (1.5pt);
\draw [fill=black] (5.38,0.8) circle (1.5pt);
\end{tikzpicture}
\caption{Definition of $t_n$, with $v_a = \lambda_n t_n w_{\max}$.}
\end{figure}
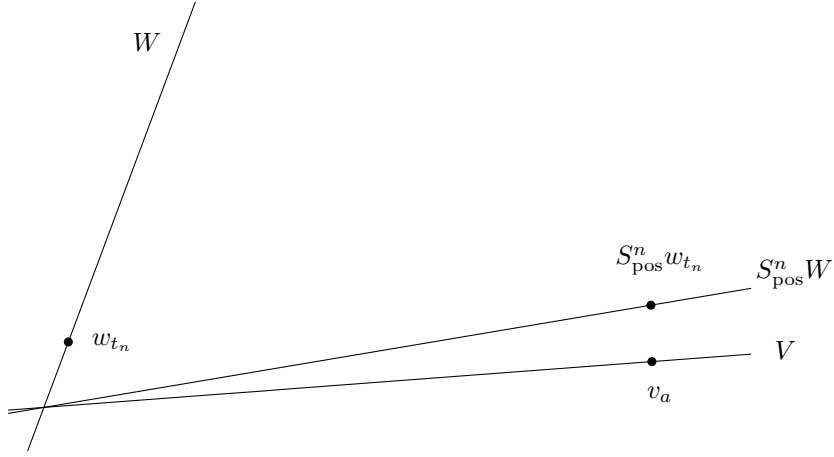

With this and \eqref{seimpleeq} we have 
$$
 \D\Bigl(  (\mathbbm{1},v_a)\tilde{S}^n(\mathbbm{1},w_\varphi)x,
 \tilde{S}^n(\mathbbm{1},w_{\varphi+t_n})x\Bigr) = \lVert S_{\mathrm{pos}}^n(w_{t_n} ) - v_a\rVert \ll \left(\frac{\eta}{\lambda}\right)^n
$$ 
and so
\[
	 \bigl\vert \phi\bigl( (\mathbbm{1},v_a)\tilde{S}^n(\mathbbm{1},w_\varphi)x\bigr) -  
	 \phi\bigl(\tilde{S}^n(\mathbbm{1},w_{\varphi+t_n})x\bigr)\bigr\vert \ll  
 \left(\frac{\eta}{\lambda}\right)^n.
\]
Therefore,
$$ 
\frac{1}{N} \sum_{n=0}^{N-1} \bigl\vert \phi\bigl((\mathbbm{1},v_a)\tilde{S}^n(\mathbbm{1},w_\varphi)x\bigr)  -  
\phi\bigl(\tilde{S}^n(\mathbbm{1},w_{\varphi+t_n})x\bigr)\bigr\vert \ll \frac{1}{N} \sum_{n=0}^{N-1} \left(\frac{\eta}{\lambda}\right)^n \ll \frac{1}{N} \longrightarrow 0
$$
as $ N \rightarrow \infty$
and we see that \eqref{mainconvergence} will follow once we show that
$$ \frac{1}{N} \sum_{n=0}^{N-1} \phi\bigl(\tilde{S}^n(\mathbbm{1},w_{\varphi+t_n})x \bigr) -  \phi\bigl(\tilde{S}^n(\mathbbm{1},w_{\varphi})x\bigr)
 \longrightarrow 0 \text{ as } N \rightarrow \infty .$$
This is the content of the next lemma whose proof will occupy the remainder of the subsection.

\begin{lemma}\label{lem:limit0}
For any $\phi\in \Lip(X)$ and a.e.\ $\varphi \in [-\frac{1}{2},\frac{1}{2}]$ we have
$$ \lim_{N\rightarrow \infty} \frac1N\sum_{n=0}^{N-1} f_n(\varphi) \longrightarrow 0,$$
where we define for every~$n\geq 1$ the real valued functions
\begin{align*}
	f_n(\varphi) &= \phi\bigl(\tilde{S}^n(\mathbbm{1},w_{\varphi+t_n})x \bigr) -  \phi\bigl(\tilde{S}^n(\mathbbm{1},w_{\varphi})x\bigr)\\
	&=\phi\bigl((\mathbbm{1},S_{\mathrm{pos}}^nw_{\varphi+t_n})\tilde{S}^nx \bigr) -  \phi\bigl((\mathbbm{1},S_{\mathrm{pos}}^nw_{\varphi})\tilde{S}^nx\bigr)
\end{align*} 
on $[-\frac{1}{2},\frac{1}{2}]$.
\end{lemma}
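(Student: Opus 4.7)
The plan is to establish an $L^2$ decay $\bigl\|\tfrac{1}{N}\sum_{n=0}^{N-1}f_n\bigr\|_{L^2([-1/2,1/2])}^2 \ll N^{-1}$ for the Cesàro averages, and then upgrade this to almost-everywhere convergence via Chebyshev's inequality plus Borel--Cantelli along a polynomial subsequence. Two uniform bounds on $F_n(\varphi)=\phi(\tilde S^n(\mathbbm{1},w_\varphi)x)$ are used throughout: the sup bound $\|f_n\|_\infty \le 2\|\phi\|_\infty$ from $|F_n|\le\|\phi\|_\infty$, and a Lipschitz bound $\|f_n\|_{\mathrm{Lip}}\le 2\|F_n\|_{\mathrm{Lip}}\ll \lambda^n$ coming from $\tilde S^n(\mathbbm{1},w_\varphi)x=(\mathbbm{1},S_{\mathrm{pos}}^n w_\varphi)\tilde S^n x$, because $\varphi$ varies the argument at speed $\|S_{\mathrm{pos}}^n w\|_\infty \le \lambda^n\|w_{\max}\|+\eta^n\|w_{<\max}\|\ll \lambda^n$.

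The crux is the cross-correlation estimate
\[
\int_{-1/2}^{1/2} f_n(\varphi)\,f_m(\varphi)\,d\varphi \;\ll\; \lambda^{-|m-n|}.
\]
Assuming $m\ge n$, I would split $\int f_n f_m = \int f_n(\varphi) F_m(\varphi+t_m)\,d\varphi - \int f_n(\varphi) F_m(\varphi)\,d\varphi$ and apply the change of variables $\varphi\mapsto\varphi+t_m$ to the first term. The boundary contribution is $\ll |t_m|\|f_n\|_\infty\|F_m\|_\infty\ll \lambda^{-m}$, and the bulk term $\int[f_n(\varphi-t_m)-f_n(\varphi)]F_m(\varphi)\,d\varphi$ is controlled by $\|f_n\|_{\mathrm{Lip}}\cdot|t_m|\cdot\|F_m\|_1\ll \lambda^n\cdot\lambda^{-m}=\lambda^{-(m-n)}$, which dominates. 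Summing with the diagonal bound $\int f_n^2\le 4\|\phi\|_\infty^2$ gives
\[
\Bigl\|\textstyle\sum_{n<N} f_n\Bigr\|_2^2 \;=\; \sum_{n,m<N}\int f_n f_m \;\ll\; \sum_{n,m<N} \lambda^{-|m-n|}\;\ll\; N,
\]
hence the announced $L^2$ decay.

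Chebyshev's inequality then yields $\bigl|\{\varphi:|\tfrac1N\sum_{n<N} f_n|>\varepsilon\}\bigr|\ll \varepsilon^{-2}N^{-1}$, and along the subsequence $N_k=k^2$ these measures are summable; Borel--Cantelli gives $\tfrac{1}{N_k}\sum_{n<N_k} f_n(\varphi)\to 0$ for a.e.\ $\varphi$. For general $N$ with $N_k\le N<N_{k+1}$, the uniform bound $|f_n|\ll 1$ combined with $(N-N_k)/N\ll N^{-1/2}$ shows that $\bigl|\tfrac1N\sum_{n<N} f_n-\tfrac{1}{N_k}\sum_{n<N_k} f_n\bigr|\ll N^{-1/2}\to 0$, so the convergence propagates to the full sequence.

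The main obstacle is the cross-correlation estimate: the geometric decay $\lambda^{-|m-n|}$ must emerge by pairing the $\lambda^n$ Lipschitz growth of $F_n$ against the $\lambda^{-m}$ smallness of $t_m$, and the boundary terms produced by the translation in $\varphi$ must be shown to be subdominant. One must also ensure the \emph{correct} pairing is used, applying the Lipschitz bound at the smaller index against the smaller shift $t_m$ (for $m\ge n$), since the reversed pairing would give $\lambda^{m-n}\ge 1$ and destroy the argument.
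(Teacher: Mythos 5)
Your proposal is correct, and its overall architecture --- a quasi-orthogonality estimate for the $f_n$, the resulting second-moment bound $\bigl\|\sum_{n<N}f_n\bigr\|_2^2\ll N$, Chebyshev's inequality, Borel--Cantelli along the subsequence $N=K^2$, and the $O(N^{-1/2})$ interpolation back to general $N$ --- coincides with the paper's proof (Lemmas~\ref{lem:integral}, \ref{Chebyshev}, \ref{lem:BC} and the argument following them). The one place where you genuinely diverge is the proof of the correlation bound. The paper's Lemma~\ref{lem:integral} works at an intermediate scale: it covers $[-\tfrac12,\tfrac12]$ by intervals $A_\varphi$ of length $2\delta$ with $\delta=\lambda^{-(m+n)/2}$, shows that the average of $f_n$ over $A_\varphi$ is $\ll\lambda^{-n}/\delta$ by telescoping the discrete-derivative structure $f_n=F_n(\cdot+t_n)-F_n$, and that $f_m$ is constant up to $O(\lambda^m\delta)$ on $A_\varphi$; balancing the two errors yields $\lambda^{-|n-m|/2}$. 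You instead transfer the shift $t_m$ directly onto $f_n$ by a change of variables and pair $\|f_n\|_{\Lip}\ll\lambda^n$ against $|t_m|\ll\lambda^{-m}$, with an $O(\lambda^{-m})$ boundary term from the translated domain; this is shorter, avoids the covering, and gives the sharper exponent $\lambda^{-|n-m|}$. Either decay is summable over $|n-m|$, so both suffice for the $O(N)$ second-moment bound, and your closing caveat about applying the Lipschitz bound at the smaller index against the shift at the larger index is precisely the point where a naive pairing would produce $\lambda^{|n-m|}$ and fail --- the paper's intermediate-scale choice of $\delta$ is its way of enforcing the same pairing. The only cosmetic omission is that the a.e.\ statement should be intersected over a countable sequence $\varepsilon_j\to 0$, which is standard and is also left implicit in the paper.
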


The following shows that the functions $f_n$ are nearly independent of each other.

\begin{lemma}\label{lem:integral}
Let $m,n \in \N$, $\phi \in \Lip(X)$. Then we have
 $\Bigl\lvert\int_{-\frac{1}{2}}^{\frac{1}{2}} f_n(\varphi)f_m(\varphi) d\varphi \Bigr\rvert \ll \lambda^{-\frac{\lvert n-m \rvert}{2}} $.
\end{lemma}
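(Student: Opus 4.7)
The plan is to exploit a clean translation identity that is hidden in the definitions: since $w_{\varphi+t_n}=w_\varphi+w_{t_n}$ by linearity, writing $B_n(\varphi)=(\mathbbm{1},S_{\mathrm{pos}}^n w_\varphi)\tilde{S}^n x$ we have
\[
 (\mathbbm{1},S_{\mathrm{pos}}^n w_{\varphi+t_n})\tilde{S}^n x=B_n(\varphi+t_n),
\]
so that
\[
 f_n(\varphi)=\phi(B_n(\varphi+t_n))-\phi(B_n(\varphi)).
\]
In other words, $f_n$ is a finite difference of $\phi\circ B_n$ with step $t_n$. This is the key structural feature that will allow a change of variable to produce a gain.

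Assume without loss of generality that $n\geq m$. The first step is a straightforward change of variable $\psi=\varphi+t_n$ in the first half of
\[
 \int_{-1/2}^{1/2} f_n(\varphi)f_m(\varphi)\,d\varphi=\int \phi(B_n(\varphi+t_n))f_m(\varphi)\,d\varphi-\int\phi(B_n(\varphi))f_m(\varphi)\,d\varphi,
\]
which rewrites the integral as
\[
 \int_{[-\tfrac12,\tfrac12]\cap[-\tfrac12+t_n,\tfrac12+t_n]}\!\!\phi(B_n(\psi))\bigl[f_m(\psi-t_n)-f_m(\psi)\bigr]d\psi\;+\;(\text{boundary terms}).
\]
The boundary terms are integrals over a set of measure $\ll|t_n|$, and so are bounded by $\|\phi\|_\infty \|f_m\|_\infty |t_n|\ll|t_n|\ll\lambda^{-n}$.

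The second step is a Lipschitz estimate on the function $\varphi\mapsto f_m(\varphi)$. Since $\|S_{\mathrm{pos}}^m w\|\leq \lambda^m\|w_{\max}\|+\eta^m\|w_{<\max}\|\ll \lambda^m$, both $(\mathbbm{1},S_{\mathrm{pos}}^m w_{\varphi+t_m})\tilde{S}^m x$ and $(\mathbbm{1},S_{\mathrm{pos}}^m w_\varphi)\tilde{S}^m x$ move with speed $\ll\lambda^m$ in $\varphi$ (they translate within a single $K$-coset, so the metric change sees only the $\R^d$ component). Applying $\Lip(\phi)$ then yields $\Lip(f_m)\ll \lambda^m$, hence $|f_m(\psi-t_n)-f_m(\psi)|\ll \lambda^m|t_n|\ll \lambda^{m-n}$. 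The main term is therefore bounded by $\|\phi\|_\infty\cdot\lambda^{m-n}\ll \lambda^{-(n-m)}$.

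Combining both contributions gives $\bigl|\int f_n f_m\,d\varphi\bigr|\ll\lambda^{-(n-m)}\leq\lambda^{-|n-m|/2}$, which is stronger than the required bound. The only substantive point in this argument is to notice the shift identity $A_n(\varphi)=B_n(\varphi+t_n)$; once that is in hand, everything reduces to standard Lipschitz/change-of-variable manipulations, so I do not expect a significant obstacle in the proof.
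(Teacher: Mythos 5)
Your proof is correct, and it actually establishes the stronger bound $\ll\lambda^{-\lvert n-m\rvert}$. The two estimates you rely on are precisely the two estimates underlying the paper's proof: (i) $f_n$ is a finite difference of $\phi\circ B_n$ with step $t_n$ and $\lvert t_n\rvert\ll\lambda^{-n}$, so that pairing $f_n$ against anything costs only boundary terms of size $\ll\lambda^{-n}$ plus a term in which the difference operator has been moved onto the other factor; and (ii) $\Lip(f_m)\ll\lambda^{m}$, since $\varphi\mapsto(\mathbbm{1},S_{\mathrm{pos}}^m w_\varphi)\tilde S^m x$ moves with speed $\lVert S_{\mathrm{pos}}^m w\rVert\ll\lambda^m$. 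Where you differ is in how these are combined. The paper never performs your global change of variable; instead it introduces the intermediate scale $\delta=\lambda^{-(m+n)/2}$, covers $[-\tfrac12,\tfrac12]$ by intervals $A_\varphi$ of length $2\delta$, uses (i) to show the average of $f_n$ over each $A_\varphi$ is $\ll\lambda^{-n}/\delta=\lambda^{-(n-m)/2}$, uses (ii) to show $f_m$ is constant up to $O(\lambda^m\delta)=O(\lambda^{-(n-m)/2})$ on each $A_\varphi$, and sums; the intermediate scale is exactly what produces the exponent $\lvert n-m\rvert/2$. Your summation-by-parts arrangement transfers the full shift $t_n$ onto $f_m$ at once and so avoids the scale $\delta$ entirely, which is both cleaner and sharper; either bound is summable over $\lvert n-m\rvert$, which is all the second-moment computation in the proof of Lemma~\ref{lem:limit0} requires. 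Two small points deserve a sentence in a final write-up: $f_m$ and $\phi\circ B_n$ must be understood as extended by their defining formulas to arguments slightly outside $[-\tfrac12,\tfrac12]$ (the paper's own proof makes the same implicit extension when it integrates over $A_\varphi$ and over $[0,t_n]$), and the distance estimate on $X$ descends from the product metric on $K\ltimes\R^d$ because quotient distances only decrease. Neither affects the validity of the argument.
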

\begin{proof}
 Fix $m < n$ and define $\delta_0 = \lambda^{-\frac{\lvert n-m \rvert}{2}} \in (0,1)$ and $\delta = \lambda^{-m} \delta_0 =  \lambda^{-\frac{m+n}{2}}$. 

Using $\lvert t_n \rvert \ll \lambda^{-n}$ we start by estimating the following average over the set $A_\varphi = [\varphi - \delta, \varphi + \delta]$:
\begin{align*}
\frac{1}{\lvert A_\varphi \rvert} \biggl\lvert \int_{A_\varphi} f_n(\theta) d\theta \biggr\rvert 
 =& \frac{1}{\vert A_\varphi \rvert} \biggl\lvert \int_{A_\varphi} 
\phi\bigl(\tilde{S}^n(\mathbbm{1},w_{\theta+t_n})x \bigr) -  \phi\bigl(\tilde{S}^n(\mathbbm{1},w_{\theta})x\bigr) 
d\theta \biggr\rvert \\
\leq& \frac{1}{\vert A_\varphi \rvert} \int_0^{t_n} \bigl\lvert \phi\bigl(\tilde{S}^n(\mathbbm{1},w_{\varphi-\delta + \theta})x ) \bigr\rvert + \bigl\lvert \phi\bigl(\tilde{S}^n(\mathbbm{1},w_{\varphi+\delta + \theta})x ) \bigr\rvert d\theta \\
 \leq& \frac{1}{\vert A_\varphi \rvert} 2 \lvert t_n \rvert \cdot \sup(\phi) 
\ll \frac{\lambda^{-n}}{\vert A_\varphi \rvert} < \delta_0,
\end{align*}
where we assume $t_n > 0$ (in the case $t_n < 0$ the integral goes over $[t_n,0]$).

Then we fix $\theta \in A_\varphi$ and have 
\begin{align*}
 \lvert f_m(\varphi) - f_m(\theta) \rvert 
=& \bigl\lvert \phi\bigl((\mathbbm{1},S_{\mathrm{pos}}^m w_{\varphi + t_m})\tilde{S}^m x\bigr) 
  - \phi\bigl((\mathbbm{1},\lambda^m w_{\theta-\varphi})(\mathbbm{1},S_{\mathrm{pos}}^m w_{\varphi + t_m})\tilde{S}^m x\bigr) \\
& - \phi\bigl((\mathbbm{1},S_{\mathrm{pos}}^m w_{\varphi})\tilde{S}^m x\bigr) 
  + \phi\bigl((\mathbbm{1},\lambda^m w_{\theta-\varphi})(\mathbbm{1},S_{\mathrm{pos}}^m w_{\varphi})\tilde{S}^m x\bigr) \bigr\rvert\\
 \leq& 2 \Lip(\phi) \lambda^m \lVert w_{\theta - \varphi} \rVert 
 \ll \lambda^m \lvert \theta - \varphi \rvert 
 \leq \lambda^m \delta = \delta_0. 
\end{align*}
In other words, $f_m$ is almost constant on small intervalls of length $O\Bigl(\lambda^{-\frac{\lvert n-m \rvert}{2}}\Bigr)$, which we use in the following step: 

\begin{align*}
\frac{1}{\vert A_\varphi \rvert} \biggl\lvert \int_{A_\varphi} f_m(\theta) f_n(\theta) d\theta \biggr\rvert 
=& \frac{1}{\vert A_\varphi \rvert} \biggl\lvert \int_{A_\varphi} ( f_m(\varphi) + O(\delta_0)) \cdot f_n(\theta) d\theta \biggr\rvert\\
\leq& \lvert f_m(\varphi) \rvert \cdot  \frac{1}{\vert A_\varphi \rvert}  \biggl\lvert \int_{A_\varphi} f_n(\theta) d\theta \biggr\rvert +O(\delta_0) 
\ll \delta_0,
\end{align*}
where we used $\lvert f_m(\varphi) \rvert \ll 1$
 and
in the last step the estimate we computed right above.

Now we cover the interval $[-\tfrac12,\tfrac12]$ with intervals of the form $A_{\varphi_i}$ and an inverval $B$ such that $[-\frac{1}{2},\frac{1}{2}] = B \sqcup \bigsqcup_{i} A_{\varphi_i}$ and $ \lvert B \rvert < 2\delta \ll \delta_0$. With
\begin{align*}
\biggl\lvert\int_{-\frac{1}{2}}^{\frac{1}{2}} f_n(\varphi)f_m(\varphi) d\varphi \biggr\rvert
\leq& 2\delta \lVert\varphi\rVert_\infty + \sum_{i} \vert A_{\varphi_i} \rvert \cdot \frac{1}{\vert A_{\varphi_i} \rvert} \cdot \biggl\lvert \int_{A_{\varphi_i}} f_n(\theta) f_m(\theta) d\theta \biggr\rvert  \\
\ll& \delta_0 + \sum_{i} \lvert A_{\varphi_i} \rvert \cdot \delta_0 \ll \delta_0
\end{align*}
we conclude the proof of the lemma.
\end{proof}

Recalling the following standard results from measure theory we are ready for the proof of Lemma~\ref{lem:limit0}.

\begin{lemma}[Chebyshev inequality]\label{Chebyshev}
Let $g: \Omega \rightarrow \R$ be measurable, $\int_\Omega g^2 d\m \leq D$. Then for all $s >0$ we have $\m ( \{\theta : \lvert g(\theta) \rvert > sD \}) \leq \frac{1}{s^2 D}$.
\end{lemma}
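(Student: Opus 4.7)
The plan is to reduce this to the elementary Markov inequality applied to the nonnegative function $g^2$. First I would rewrite the event in terms of $g^2$: since $sD \geq 0$, the set $\{\theta \in \Omega : |g(\theta)| > sD\}$ is identical to $\{\theta \in \Omega : g(\theta)^2 > s^2 D^2\}$. Then I would use the standard Markov bound for nonnegative measurable functions, which for any $c > 0$ gives $\m(\{h > c\}) \leq c^{-1}\int_\Omega h \, d\m$; in our case this is proved in a single line by splitting
\[
 \int_\Omega g^2 \, d\m \;\geq\; \int_{\{g^2 > s^2 D^2\}} g^2 \, d\m \;\geq\; s^2 D^2 \cdot \m\bigl(\{g^2 > s^2 D^2\}\bigr).
\]
Rearranging and invoking the hypothesis $\int g^2\, d\m \leq D$ yields
\[
 \m\bigl(\{|g| > sD\}\bigr) \;\leq\; \frac{\int_\Omega g^2 \, d\m}{s^2 D^2} \;\leq\; \frac{D}{s^2 D^2} \;=\; \frac{1}{s^2 D},
\]
which is exactly the claim.

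Since this is a textbook inequality, there is no genuine obstacle; the only small observation worth recording is that the particular form of the bound comes from balancing the threshold $sD$ (scaled by $D$ rather than a free parameter $t$) with the assumed $L^2$ size $D$, which is precisely the way the lemma will be invoked in the proof of Lemma~\ref{lem:limit0}: there one will apply it to the average $g = \tfrac{1}{N}\sum_{n=0}^{N-1} f_n$ whose $L^2$ norm is controlled by Lemma~\ref{lem:integral}, and the exponent $\tfrac{1}{s^2 D}$ will be what makes a Borel--Cantelli argument along a suitable subsequence of $N$ go through.
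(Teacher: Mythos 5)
Your proof is correct: it is the standard Markov-inequality argument applied to $g^2$, which is exactly the textbook derivation the paper implicitly relies on (the paper states this lemma as a standard fact without proof). The only point worth being explicit about is that $D\geq\int_\Omega g^2\,d\m\geq 0$, so the case $D=0$ is vacuous and for $D>0$ the identification $\{|g|>sD\}=\{g^2>s^2D^2\}$ is legitimate, as you note.
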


\begin{lemma}[Borel-Cantelli]\label{lem:BC}
Let $B_1, B_2, \dots$ be  measurable subsets in a measure space $(X,\m)$ with $\sum_{i=1}^\infty \m (B_i) < \infty$. Then $$\m \Biggl(\bigcap_{i=1}^{\infty} \bigcup_{n=i}^{\infty} B_n\Biggr) = 0.$$
\end{lemma}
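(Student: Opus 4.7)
The plan is to observe that $\bigcap_{i=1}^{\infty}\bigcup_{n=i}^{\infty} B_n$ is the classical $\limsup$ of the sets $B_n$, and to bound its measure by truncating the convergent series $\sum_{i}\m(B_i)$ to an arbitrary tail.

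First I would fix any $i\geq 1$ and note the monotonicity inclusion
\[
 \bigcap_{j=1}^{\infty}\bigcup_{n=j}^{\infty} B_n \;\subseteq\; \bigcup_{n=i}^{\infty} B_n.
\]
This reduces the problem to controlling the measure of the right-hand side. Applying countable subadditivity of the measure $\m$ gives
\[
 \m\Bigl(\bigcup_{n=i}^{\infty} B_n\Bigr) \;\leq\; \sum_{n=i}^{\infty} \m(B_n).
\]

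Next I would use the hypothesis $\sum_{n=1}^{\infty}\m(B_n) < \infty$. Since the full series converges, the tail $r_i := \sum_{n=i}^{\infty}\m(B_n)$ satisfies $r_i \to 0$ as $i\to\infty$. Combining the two displayed estimates yields
\[
 \m\Bigl(\bigcap_{j=1}^{\infty}\bigcup_{n=j}^{\infty} B_n\Bigr) \;\leq\; r_i
\]
for every $i\geq 1$, and letting $i\to\infty$ forces the measure to be zero, which is the claim.

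There is no real obstacle here: the argument uses only countable subadditivity and the tail-decay of a convergent series, both elementary. The one thing to be careful about is that no continuity from above is needed (which would have required a finite-measure hypothesis on $\bigcup_n B_n$); the $\subseteq$ inclusion together with subadditivity is enough to give the bound for every $i$ directly.
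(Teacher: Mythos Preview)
Your proof is correct and is the standard argument for the Borel--Cantelli lemma. The paper itself does not supply a proof of this lemma at all; it is merely recalled as a standard measure-theoretic fact before being applied in the proof of Lemma~\ref{lem:limit0}, so there is nothing to compare against beyond noting that your argument is exactly the classical one.
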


\begin{proof}[Proof of lemma~\ref{lem:limit0}]
We show that for a.e.\ $\varphi \in \R$ and for all $ \varepsilon > 0$: $$\limsup_{N \rightarrow \infty} \Biggl|\frac{1}{N} \sum_{n=0}^{N-1} f_n(\varphi)\Biggr| < \varepsilon.$$
Let $\varepsilon >0$. 
Since $N$ is growing, it is possible to approximate it by a square: For every $N$ we choose $K \in \N$ with $K^2 \leq N < (K+1)^2$. This ensures $N - K^2 \leq 2K \leq 2 \sqrt{N}$ and therefore 
$$\frac{1}{N} \Biggl\lvert \sum_{n=K^2}^{N-1} f_n(\varphi) \Biggr\rvert \leq \frac{1}{N} \bigl(N-1-K^2\bigr) 2 \lVert\varphi\rVert_\infty 
\ll \frac{\sqrt{N}}{N} = \frac{1}{\sqrt{N}}
$$ 
and so $\frac{1}{N} \Bigl\lvert \sum_{n=K^2}^{N-1} f_n(\varphi) \Bigr\rvert < \frac{\varepsilon}{2}
$
by choosing $N$ large enough. So it is enough to prove the following 

\begin{claim}For $K$ big enough and for a.e.\ $\varphi \in [-\frac{1}{2},\frac{1}{2}]$ we have
$$\Biggl\lvert \frac{1}{K^2} \sum_{n=0}^{K^2-1} f_n(\varphi) \Biggr\rvert < \frac{\varepsilon}{2}.$$
\end{claim}

Define the function $g(\theta) = \frac{1}{N} \sum_{n=0}^{N-1} f_n(\theta)$ on $\Omega = I = [-\frac{1}{2},\frac{1}{2}]$.
Using Lemma~\ref{lem:integral} we compute the second moment $D_0 = \lVert g \rVert_2^2$ of $g$:
\begin{align*}
N^2 D_0 =& \int_\Omega N^2 g(\theta)^2 d\theta 
= \int_I \left(\sum_{n=0}^{N-1} f_n(\theta) \right)^2 d\theta \\
=& \int_I \sum_{n=0}^{N-1} f_n(\theta) ^2 d\theta +
    2 \int_I \sum_{0 \leq m < n < N} f_m(\theta) f_n(\theta)  d\theta \\
=& \sum_{n=0}^{N-1} \int_I f_n(\theta) ^2 d\theta +
    2 \sum_{0 \leq m < n < N} \int_I f_m(\theta) f_n(\theta)  d\theta \\
\ll& N \int_I \lVert\varphi\rVert_\infty ^2 d\theta +
    \sum_{0 \leq m < n < N} \lambda^{-\frac{\lvert n-m \rvert}{2}}  \\
\ll& N +
    N \sum_{j = 1}^{N-1} \lambda^{-\frac{j}{2}} 
\ll N
\end{align*}
so that we get $D_0 \leq \tfrac{C}{N}$ for some constant $C$. Choosing $D=\tfrac{C}{N}$ and $s= \frac{\varepsilon N}{2 C}$ we have $sD = \frac{\varepsilon}{2}$. We define the sets
$A_N = \{ \varphi \in [-\tfrac{1}{2},\tfrac{1}{2}] \colon \bigl\lvert \tfrac{1}{N} \sum_{n=0}^{N -1} f_n(\varphi) \bigr\rvert > \frac{\varepsilon}{2} \}$
 and apply Lemma~\ref{Chebyshev} to get $\m (A_N) \leq \frac{1}{s^2 D} = \frac{4 C}{\varepsilon^2 N}$.

Next we will apply Lemma~\ref{lem:BC} (Borel-Cantelli) to the sets $B_K = A_{K^2}$ with $\m $ the Lebesque measure. Since
$$\sum_{K=1}^\infty \m (B_K) = \sum_{K=1}^\infty \m (A_{K^2}) \leq \sum_{K=1}^\infty \frac{4 C}{\varepsilon^2 K^2} =  \frac{4 C}{\varepsilon^2} \frac{\pi^2}{6} < \infty$$
we have $\m (\bigcap_{i=1}^{\infty} \bigcup_{K=i}^{\infty} A_{K^2}) = 0$. Now we analyse this set: $\bigcup_{K=i}^{\infty} A_{K^2} = \{ \varphi \in I\colon $ $ \exists K \geq i \colon \lvert \frac{1}{K^2} \sum_{n=0}^{K^2-1} f_n(\varphi) \rvert > \frac{\varepsilon}{2} \}$ and $\bigcap_{i=1}^{\infty} \bigcup_{K=i}^{\infty} A_{K^2} = \{\varphi \in I\colon \forall i \in \N ~ \exists K \geq i \colon$ $ \lvert \frac{1}{K^2} \sum_{n=0}^{K^2-1} f_n(\varphi) \rvert > \frac{\varepsilon}{2} \} \supseteq \{\varphi \in I\colon \limsup_{K \rightarrow \infty} \lvert \frac{1}{K^2} \sum_{n=0}^{K^2-1} f_n(\varphi) \rvert > \frac{\varepsilon}{2} \}$. By Borel-Cantelli the measure of the last set has to be zero and we know that for almost every $\varphi \in I$ $\limsup_{K \rightarrow \infty} \lvert \frac{1}{K^2} \sum_{n=0}^{K^2-1} f_n(\varphi) \rvert \leq \frac{\varepsilon}{2}$ holds. This proves the above claim and therefore the Lemma itself.
\end{proof}

\begin{proof}[Proof of Proposition~\ref{prop:invariance}]
Choose a countable family of functions in $\Lip(X)$ that are dense in $C(X)$. Applying Lemma~\ref{lem:limit0} to each of these gives us a conull set $P \subseteq [-\tfrac 12,\tfrac 12]$ such that the conclusion of the lemma holds for all $\varphi \in P$. Recall that this shows \eqref{mainconvergence} for those~$\phi$. 

If now~$\mu$ is a weak$^*$ limit of the sequence of measures as in \eqref{muN}, 
\eqref{mainconvergence} shows $\int \phi((\mathbbm{1},v_a)y)d\mu = \int \phi(y) d\mu$. 
Using density of the set of functions $\phi$ we have chosen above the proposition follows.
\end{proof}

\subsection{Using Poincar\'e Recurrence}
Using the invariance under $V$ from Proposition~\ref{prop:invariance} we can prove tho following theorem:

\begin{theorem} \label{thm:poincare}
Let $W = \{w_\varphi\}$ as before 
and $x \in \T^d$. Then for almost every $\varphi \in [-\tfrac{1}{2}, \tfrac{1}{2}] \colon x + w_\varphi \in Eq(S)$.
\end{theorem}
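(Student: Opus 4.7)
The plan is to lift the problem to $X$ via the factor map $\Theta: X\to \T^d$, which satisfies $\Theta\circ\tilde{S} = S\circ\Theta$. Setting $\tilde{x}=(\mathbbm{1},x)\Gamma\in X$, one has
\[
 \Theta_*\mu_{N,\varphi} = \frac{1}{N}\sum_{n=0}^{N-1}\delta_{S^n(x+w_\varphi)},
\]
so the theorem reduces to showing that, for almost every $\varphi$, every weak$^*$ limit $\mu$ of $(\mu_{N,\varphi})$ satisfies $\Theta_*\mu = m_{\T^d}$. By Proposition~\ref{prop:invariance} such a limit is $V$-invariant for a.e.\ $\varphi$; it is $\tilde{S}$-invariant as a Ces\`aro limit; and its projection $\pi_*\mu$ to $K$ equals the Haar measure $m_K$, since the rotation $R_{k_S}$ is uniquely ergodic on $K=\overline{\langle k_S\rangle}$.

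Under the change of coordinates $(k,y)\Gamma\mapsto (k,k^{-1}y+\Z^d)$, the space $X$ is identified with $K\times\T^d$, the transformation $\tilde{S}$ becomes the direct product $R_{k_S}\times S$, the factor map $\Theta$ becomes the projection onto $\T^d$, and left multiplication by $(\mathbbm{1},v)$ for $v\in V$ becomes the fiberwise translation $(k,\bar y)\mapsto (k,\bar y+k^{-1}v)$. Disintegrating $\mu = \int_K\mu_k\, dm_K(k)$, one obtains that $\mu_k$ is $k^{-1}V$-invariant on $\T^d$ for $m_K$-a.e.\ $k$ (by testing the $V$-invariance on a countable dense subset of $V$), and that $\mu_{k_S k}=S_*\mu_k$ for $m_K$-a.e.\ $k$.

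The main step is to show $\mu_k = m_{\T^d}$ for $m_K$-a.e.\ $k$. The claim is that for $m_K$-a.e.\ $k$ the line $k^{-1}V$ is a Kronecker direction in $\R^d$, so its translation action on $\T^d$ is uniquely ergodic and forces $\mu_k = m_{\T^d}$. To verify this, note that the exceptional set $A_q=\{k\in K : k^{-1}w_\lambda\in q^\perp\}$ for fixed $q\in\Z^d\setminus\{0\}$ is a real-analytic subvariety of $K$; a Baire-category argument on the real-analytic manifold $K\cdot w_\lambda$ shows that if $A_q$ has positive $m_K$-measure, then $K\cdot w_\lambda\subseteq q^\perp$, so its linear span $U$ lies in $q^\perp$. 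The subspace $U$ is $K$-invariant by construction, and $S$-invariant as well: since $S_{\mathrm{pos}}$ acts on $W_{\max}\supseteq U$ as the scalar $\lambda$ and $k_S^{-1}\in K$ preserves $U$, the map $S=S_{\mathrm{pos}}k_S^{-1}$ also preserves $U$. Then $\bigcap_{n\geq 0}S^{-n}q^\perp$ is a rational, $S$-invariant, proper subspace of $\R^d$ containing $U\neq\{0\}$, contradicting total irreducibility of $S$. For any remaining $k$ at which $k^{-1}V$ fails to be Kronecker, the relation $\mu_{k_S^n k}=S^n_*\mu_k$ together with the Poincar\'e-recurrence/minimality of $R_{k_S}$ lets us pick $n$ so that $(k_S^n k)^{-1}V$ is Kronecker — whence $\mu_{k_S^n k}=m_{\T^d}$ — and pushing back by $S^{-n}$ gives $\mu_k=m_{\T^d}$. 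Integrating over $K$ yields $\Theta_*\mu = m_{\T^d}$.

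The main obstacle is establishing the Kronecker property for $m_K$-almost every $k$, which is exactly where total irreducibility of $S$ enters in an essential way, through the impossibility of a proper rational subspace containing the $S$-invariant subspace $U$. The Poincar\'e-recurrence/transfer step along the $\tilde{S}$-orbit then handles any residual non-Kronecker directions; once $\mu_k=m_{\T^d}$ holds $m_K$-almost surely, the conclusion for $\Theta_*\mu$ is immediate.
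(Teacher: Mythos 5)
Your proof takes a genuinely different route from the paper's. You disintegrate the $\tilde S$-invariant limit measure $\mu$ over the $K$-fibers, use unique ergodicity of $R_{k_S}$ to force the marginal on $K$ to be Haar, and then argue that the translation direction $k^{-1}V$ in each fiber $\{k\}\times\T^d$ is Kronecker for a.e.\ $k$, whence each conditional $\mu_k$ is Lebesgue. The paper instead takes the ergodic decomposition of $\mu$ under the $V$-translation itself, identifies each ergodic component as Haar measure on an affine coset of a \emph{rational} subspace $P_k$ with $\overline{k^{-1}V+\Z^d}=P_k+\Z^d$, partitions $X$ into level sets $A_P$, and applies Poincar\'e recurrence for $\tilde S$ to get $S^lP=P$ for some $l$, which total irreducibility then forces to be $\R^d$. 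Your route has the advantage of making the unique-ergodicity mechanism explicit; the paper's route has the advantage of being completely agnostic to the geometry/topology of $K$.

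That difference is where your write-up has a real gap. The step ``if $A_q$ has positive $m_K$-measure, then $K\cdot w_\lambda\subseteq q^\perp$'' is only correct when $K$ is \emph{connected}: what the identity principle for real-analytic functions (not Baire category, incidentally) actually gives is that the analytic function $k\mapsto\langle q,k^{-1}w_\lambda\rangle$ vanishes identically on any connected component of $K$ on which $A_q$ has positive measure. When $S$ has a negative real eigenvalue, $K=\overline{\langle k_S\rangle}$ is genuinely disconnected (it contains a $\Z/2$ factor), so you may only conclude $K^0k_0\cdot w_\lambda\subseteq q^\perp$ for a single component $K^0k_0$. That subspace $U_0=\operatorname{span}(K^0k_0\cdot w_\lambda)$ is \emph{not} $S$-invariant, since $S=S_{\mathrm{pos}}k_S^{-1}$ and $k_S^{-1}$ shifts components; your contradiction via $\bigcap_n S^{-n}q^\perp$ therefore does not apply as written. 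The argument can be repaired by noting that $S^m U_0=U_0$ where $m=|K/K^0|$, building the corresponding $S^m$-invariant rational proper subspace and invoking total irreducibility of $S^m$ — but this has to be said, and once done your Poincar\'e/minimality fallback paragraph becomes both unnecessary and, as phrased (``pick $n$ so that $(k_S^nk)^{-1}V$ is Kronecker''), unjustified, since denseness of $\{k_S^nk\}$ in $K$ does not let one hit a prescribed full-measure $G_\delta$ subset. The paper's Poincar\'e-recurrence argument on the level sets $A_P$ is precisely what replaces this fiber-by-fiber Kronecker analysis and absorbs the disconnected case for free (total irreducibility of $S^l$ for \emph{all} $l$ being exactly the hypothesis needed).
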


Up to here we have always written elements in $K\ltimes\R^d$ in the coordinates 
$(k,x)$ corresponding to $\begin{pmatrix}
k & x\\
0 & 1
\end{pmatrix} $ (which simplified the action of~$\R^d$ on the left). 
Equivalently we can use the coordinates $[k,x]$
corresponding to $\begin{pmatrix}
k & 0\\
0 & 1
\end{pmatrix}\begin{pmatrix}
\mathbbm{1} & x\\
0 & 1
\end{pmatrix}$, which are more convenient to use
for the following argument. 
This gives the transformation laws $[k,x] = (k,kx)$, $(k,x) = [k, k^{-1} x]$  
between these coordinate systems for all~$k\in K$ and $x\in\R^d$. 
The multiplication rule has now the form $[k_0, x] [k,y] = [k_0 k, k^{-1}x +y]$ and the 
transformation on $X$ (defined by $\tilde{S} (k,x) \Gamma= (k_S k, S_{\mathrm{pos}}x)\Gamma$
for all~$(k,x)\in K\ltimes\R^d$)
now becomes
$$
\tilde{S} [k,y] \Gamma= \tilde{S} (k,ky) \Gamma= (k_S k, S k_S k y) \Gamma= [k_S k, Sy]\Gamma
$$
in new coordinates $[k,y]\in K\ltimes\R^d$.

From now on we work in the new coordinates, which have the advantage that
\begin{equation}\label{isomorph}
	 [k,x] \Gamma \in X\mapsto [k,x+\Z^d]\in K\times \T^d.
\end{equation}
is an isomorphism. For convenience of notation we will use this isomorphism
implicitly and write~$[k,x]\in X$ if~$k\in K$ and~$x\in\T^d$. We also note that,
with this understanding,  the factor map 
$\Theta$ is now defined by $\Theta([k,x]) = x$
for all~$[k,x]\in X$.

Proposition~\ref{prop:invariance} gives us many~$\tilde{S}$-invariant measures
on~$X$ that are also
invariant under all $v \in V$. In the new coordinates this gives invariance under the transformation 
\[
 [k,x] \mapsto [\mathbbm{1}, v] \cdot [k,y] = [k, y + k^{-1} v]
\]
 for any~$v\in V$.

\begin{definition}
 A subspace $P$ is called rational if it can be written as the linear span of rational vectors: $P=\{v=\sum_{j=1}^k \kappa_j e_j \colon \kappa_j \in \R \}$ for some $1 \leq k \leq d \text{ and } e_j \in \Q^d$.
\end{definition}

We recall that $P$ is rational if and only if $P + \Z^d$ is closed in $\T^d$, and all connected subgroups of $\T^d$ are of this form.

\begin{proof}[Proof of Theorem~\ref{thm:poincare}]
Recall that $W \nsubseteq V_S^{0-}$ is a one-dimensional subspace with $V$ its dominating eigenspace
with respect to~$S_{\mathrm{pos}}$. As before fix some $x \in X$ and let 
$\mu = \mu_{x, \varphi}$ be a weak* limit of 
$\mu_N = \tfrac{1}{N} \sum_{n=0}^{N-1} \tilde{S}^n_* \delta_{(\mathbbm{1},x + w_\varphi)\Gamma}$. 
We choose~$\varphi\in[-\frac12,\frac12]$ so that it satisfies the conclusion of Proposition~\ref{prop:invariance} and obtain that  $\mu$  is invariant under
$\tilde{S}$ and 
the left action of $V$.  

In particular, we may consider the ergodic decomposition of~$\mu$ with respect to the action of~$V$. 
As is well known we may obtain the ergodic components of~$\mu$ using the probability space~$(X,\mu)$ 
itself (see e.g.\ \cite[Theorem 6.2]{EW}).
For this let $\mathcal{E} = \{B\in\mathcal B_X\mid B \text{ is invariant under } V\}$ 
and decompose  $\mu$ into conditional measures to obtain the
decomposition into $V$-ergodic components: 
$\mu = \int \mu_{[k,y]}^{\mathcal{E}} d\mu([k,y])$. 
We will show that almost every ergodic component will be the 
Lebesgue measure on the fiber $\{k\} \times \T^d$.

We fix a typical $[k,y]\in X$ together with its ergodic measure $\mu_{[k,y]}^{\mathcal{E}}$
and may assume (see \cite[Thm.~6.2, Thm.~8.20]{EW})
that the point~$[k,y]$ indeed equidistributes w.r.t.\ the action of~$V$
to its ergodic component $\mu_{[k,y]}^{\mathcal{E}}$. 
As noted before the action of $v \in V$ has the form
$$
 [\mathbbm{1},v] [k,y] = [k, k^{-1}v + y]
$$
for any $v \in V$,~$k\in K$, and~$y\in\T^d$, 
i.e.\ the coordinate~$k\in K$ remains unchanged and we have simply the translation action
of~$k^{-1}V$ on~$\T^d$. It follows that $\mu_{[k,y]}^{\mathcal{E}}$ is the Lebesgue measure on 
the closure of the connected group $k^{-1}V + \Z^d$ within $\T^d$.  Recall
that $\overline{k^{-1}V + \Z^d} = P_k + \Z^d$ for a rational subspace $P_k$, which only depends on $k \in K$ (and not on $y \in \T^d$).
Hence $\mu_{[k,y]}^{\mathcal{E}}$ is the normalized Lebesgue measure $m_{[k,P_k + y]}$
supported on some affine rational subspace $[k,P_k+y]$.

Therefore we define the following map $\Phi \colon X \longmapsto \{ \text{rational subspaces of } \R^d\}$. For any $[k,y] \in X$ let $P_k$ be the rational subspace $\R^d$ such that $\overline{y + k^{-1} V + \Z^d} = y + P_k + \Z^d$ and define $\Phi ([k,y]) = P_k$. 
For a given rational subspace $P$ of $\R^d$ 
we also define the level set $A_P = \{ [k,y] \colon \Phi([k,y]) = P \} \subseteq X$. 
Since the set of rational subspaces is countable we can write $X = \bigsqcup_{P } A_P$ as a countable union of sets of this form.

Now we want to analyse what happens when we act with $\tilde{S}$:
Since $V$ is an eigenspace of $S_{\mathrm{pos}}$, it follows that the~$\sigma$-algebra of~$V$-invariant
sets is invariant under~$\tilde{S}$. It follows that the ergodic components for the action of~$V$
are almost surely mapped to the ergodic components under the action of~$\tilde{S}$, i.e.~$\tilde{S}_*\mu_{[k,y]}^{\mathcal E}=\mu_{\tilde{S}[k,y]}^{\mathcal E}$. 
However, $\mu_{[k,y]}^{\mathcal E}=m_{[k,P_k+y]}$ and~$\mu_{\tilde{S}[k,y]}^{\mathcal E}=
\mu_{[k_sk,Sy]}=m_{[k_sk,P_{k_sk}+Sy]}$ almost surely and so~$S(P_k)=P_{k_sk}$.

Choose $P$ to be a rational subspace such that $\mu (A_{P}) > 0$. 
Applying Poincar\'e recurrence tells us that there exists an $l \in \N$ such that $\mu (A_{P} \cap \tilde{S}^l A_{P}) > 0$. Together with the above this implies that~$S^l P=P$. However, since $S$ is totally irreducible and $\dim P \geq 1$, we know that $P = \R^d$ (for more details regarding this fact see Section~\ref{dim1}).
This means that only one level set~$A_{P}$ has positive measure, namely the one for~$P=\R^d$, 
and hence every ergodic component $\mu_{[k,y]}^{\mathcal{E}}$ is the Lebesgue measure on the fiber $\{k\} \times \T^d$. 

Now we take the push forward of $\mu_N$ and $\mu$ under the factor map $\Theta: X \longrightarrow \T^d$ with $[k,x] \mapsto x$: It is easy to see that $\Theta_* \mu_N = \tfrac{1}{N} \sum_{n=0}^{N-1} S^n_* \delta_{x + w_\varphi}$ and $\Theta_* \mu$ is the Lebesgue measure on the torus. Of course we have $\Theta_* \mu_N \rightarrow \Theta_*\mu =  \m_{\T^d}$ as $N \rightarrow \infty$ which immediately gives $x+w_\varphi \in Eq(S)$.
\end{proof}

\subsection{Proof of Theorem~\ref{thm:meas}}

\begin{lemma}\label{lem:winproj}
 Let $W \subseteq V_T^{0-}$ be a 1-dimensional subspace and let ${W^{\bot}}$ be any subspace of $\R^d$ with $W \oplus {W^{\bot}} = \R^d$. Then $(x_1+(B(1,0)\cap {W^{\bot}}))\cap ND(T)$ is thick inside $(x_1+(B(1,0)\cap {W^{\bot}}))$.
\end{lemma}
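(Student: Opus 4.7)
The plan is to mimic the proof of Theorem~\ref{thm:win}, but since $W$ now lies inside $V_T^{0-}$, Proposition~\ref{prop:win} cannot be applied along $W$ itself. Instead I would find a one-dimensional direction $U\subseteq W^{\bot}$ that is transverse to $V_T^{0-}$, so that Proposition~\ref{prop:win} applies along $U$, and then assemble the $(d-1)$-dimensional lower bound via the Marstrand slicing theorem. Existence of such a $U$ is secured as follows: because $T$ is totally irreducible and $\det T=\pm1$, some eigenvalue of $T$ must lie outside the unit disc, so $V_T^{0-}$ is a proper subspace of $\R^d$; combined with $W\subseteq V_T^{0-}$ and $W\oplus W^{\bot}=\R^d$ this forces $W^{\bot}\not\subseteq V_T^{0-}$. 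Consequently the set of one-dimensional $U\subseteq W^{\bot}$ with $U\cap V_T^{0-}=\{0\}$ is nonempty (in fact generic). Fix such a $U$ and a complementary $(d-2)$-dimensional subspace $U'\subseteq W^{\bot}$, giving $W^{\bot}=U\oplus U'$.

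For this choice of $U$, Proposition~\ref{prop:win} (applied with $U$ in place of $W$) shows that for every $y\in\T^d$ and every line segment $A_0\subset y+U$ the set $ND_0(T)\cap A_0$ is $\tfrac13$-winning in $A_0$, and in particular $\dim(ND(T)\cap A_0)=1$. Next, given any point $p\in x_1+W^{\bot}$ lying inside the ball and any radius $r>0$ small enough that the product box fits in $x_1+(B(0,1)\cap W^{\bot})$ (which is immediate in the supremum norm adapted to $W^{\bot}=U\oplus U'$), I would invoke Theorem~\ref{thm:MarSlice} with
\begin{align*}
 M_1 &= p+(B(0,r)\cap U'),\\
 M_2 &= B(0,r)\cap U,\\
 A &= M_1,\\
 B &= ND(T)\cap(M_1+M_2).
\end{align*}
Every slice $B_a=ND(T)\cap(a+M_2)$ has dimension~$1$ by the previous step, while $\dim A=d-2$, so Theorem~\ref{thm:MarSlice} yields $\dim B\geq d-1$. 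Since $B$ is contained in the $(d-1)$-dimensional affine ball $x_1+(B(0,1)\cap W^{\bot})$, equality must hold. As every relatively open subset of $x_1+(B(0,1)\cap W^{\bot})$ contains such a product box, this delivers the claimed thickness.

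The only non-routine step is finding the transverse direction $U\subseteq W^{\bot}$; this rests on the properness of $V_T^{0-}$, which is where total irreducibility enters. Everything else is a direct parallel of the argument used in the proof of Theorem~\ref{thm:win}, with the roles of the ``winning direction'' and the ``equidistributing direction'' essentially swapped.
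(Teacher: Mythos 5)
Your argument is correct, but it follows a genuinely different route from the paper's. The paper's proof is a two-line deduction: it takes as known that $ND(T)$ is thick in $\T^d$, invokes Lemma~\ref{lem:foliation} to see that $ND(T)$ is saturated by cosets of $W\subseteq V_T^{0-}$ — so that locally $ND(T)$ is a product $W_1\times(B(0,1)\cap W)$ with $W_1\subseteq x_1+W^{\bot}$ — and then uses the product inequality for Hausdorff dimension ($\dim(W_1\times I)\leq\dim W_1+1$) to force $\dim W_1\geq d-1$ on every small box. You instead rebuild the lower bound from scratch inside the hyperplane $x_1+W^{\bot}$: you pick a line direction $U\subseteq W^{\bot}$ transverse to $V_T^{0-}$, apply Proposition~\ref{prop:win} to get winning (hence one-dimensional) slices along $U$, and run Theorem~\ref{thm:MarSlice} over a $(d-2)$-dimensional transversal $U'\subseteq W^{\bot}$. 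Both arguments are valid. The paper's is shorter given what is already established and makes essential use of the foliation lemma; yours never uses Lemma~\ref{lem:foliation} and does not need the ambient thickness of $ND(T)$, so it is more self-contained, at the cost of re-running the Marstrand machinery. Two small points you gloss over: the properness of $V_T^{0-}$ (needed for the existence of $U$) does require an argument — by Kronecker's theorem, if all eigenvalues of $T\in\GL_d(\Z)$ lay in the closed unit disc they would all be roots of unity and some power of $T$ would have reducible characteristic polynomial, contradicting total irreducibility — and in the edge case $d=2$ your transversal $U'$ is trivial, which is harmless but worth noting.
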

\begin{proof}
Recall that we already showed that~$ND(T)$ is thick. Now the lemma 
follows immediately from Lemma~\ref{lem:foliation}
and the properties of the Hausdorff dimension for products of the form~$W_1\times(B(1,0)\cap W)$
with~$W_1\subset W^\bot$.
\end{proof}

\begin{proof}[Proof of Theorem~\ref{thm:meas}]
The proof follows the same line as the proof of Theorem~\ref{thm:win}, we have only to exchange the roles of $(S,Eq(S))$ and $(T,ND(T))$. Of course we define $W \subseteq V_T^{0-}$ such that $W \cap V_S^{0-} = \{0\}$ and use the norm defined in \S \ref{sec:ChaikaEskin}. 
We apply Lemma \ref{lem:winproj} to show that the set
$\{v\in B(0,1)\cap W^\bot \vert x_1+v\in ND(T) \}$
 is thick and later Theorem \ref{thm:poincare} to ensure that $Eq(S) \cap A_0(x)$ has full measure as a subset of $A_0(x)$. The Marstrand Slicing Theorem \ref{thm:MarSlice} concludes the proof.
\end{proof}

\section{Equality of weak stable subspaces}

In this section we analyze the case $V_S^{0-} = V_T^{0-}$ and prove Theorem~\ref{conjecture}.

\subsection{The simpler case $\dim V_S^{0-} = 1$} \label{dim1}
Let $S$ be totally irreducible and let $W$ be a rational subspace invariant under $S$. 
Then we claim that $W$ has to be either~$\{0\}$ or~$\R^d$. In fact, this follows since
the characteristic polynomial
of~$S$ restricted to~$W$ has rational coefficients and is a divisor or the characteristic polynomial
of~$S$. Using this and Galois theory we prove the following first step towards Theorem~\ref{conjecture}.

\begin{lemma}\label{firstlemma}
Let $S$ and $T$ be irreducible and $v$ be a common eigenvector of $S$ and $T$. Then $S$ and $T$ commute.
\end{lemma}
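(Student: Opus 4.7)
The plan is to leverage the Galois action on the algebraic closure $\overbar{\Q}$ together with the irreducibility of the characteristic polynomial of $S$. Because $S \in \GL_d(\Z)$ is irreducible, its characteristic polynomial $p_S \in \Z[x]$ is irreducible over $\Q$, hence has $d$ distinct roots $\nu_1,\dots,\nu_d \in \overbar{\Q}$, and the Galois group $G = \operatorname{Gal}(\overbar{\Q}/\Q)$ acts transitively on this set of eigenvalues. The first step will be to note that the common eigenvector $v$ may be taken in $\overbar{\Q}^d$: writing $Sv = \nu v$, the eigenspace $\ker(S - \nu \Id)$ is one-dimensional and defined over $\Q(\nu)$, so $v$ is a scalar multiple of some $v_0 \in \Q(\nu)^d$; after rescaling, $Tv_0 = \mu v_0$ with $\mu \in \overbar{\Q}$.

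The key step is to apply Galois automorphisms componentwise. For any $\sigma \in G$, since $S$ and $T$ have rational entries,
\[
 S\sigma(v) = \sigma(Sv) = \sigma(\nu)\sigma(v), \qquad T\sigma(v) = \sigma(\mu)\sigma(v),
\]
so $\sigma(v)$ is again a common eigenvector with eigenvalues $\sigma(\nu)$ and $\sigma(\mu)$. By transitivity of $G$ on $\{\nu_1,\dots,\nu_d\}$, choosing $\sigma_i$ with $\sigma_i(\nu)=\nu_i$ produces $d$ vectors $v_i = \sigma_i(v)$ that are eigenvectors of $S$ with pairwise distinct eigenvalues. Hence $\{v_1,\dots,v_d\}$ is a basis of $\C^d$ that simultaneously diagonalizes both $S$ and $T$. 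Since diagonal matrices commute, $ST = TS$.

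There is essentially no obstacle here beyond the initial bookkeeping: the only subtle point is checking that the chosen eigenvector can legitimately be regarded as living in $\overbar{\Q}^d$ so that the Galois action makes sense, which the argument above settles. The argument also clarifies why this forms the right first step towards Theorem~\ref{conjecture}: once a common eigendirection forces commutativity, the proof of Theorem~\ref{conjecture} reduces to producing such a direction from the hypothesis $V_S^{0-} = V_T^{0-}$ together with the arithmetic constraint $\gcd(d,\dim V_S^{0-}) = 1$.
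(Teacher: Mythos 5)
Your proof is correct, and its core mechanism is the same as the paper's: apply Galois automorphisms to the relation $Sv=\nu v$, $Tv=\mu v$ (after observing that $v$ can be taken algebraic) to produce a family of common eigenvectors, and conclude that $S$ and $T$ are simultaneously diagonalizable. Where you diverge is in the final step showing that the conjugates of $v$ actually span. The paper takes $W=\operatorname{span}\{\sigma_1(v),\dots,\sigma_n(v)\}$, notes that $W$ is Galois-stable and hence a rational subspace, and then invokes invariance under the (totally) irreducible $S$ to force $W=\R^d$ before extracting a basis. You instead use transitivity of the Galois group on the $d$ distinct roots of the irreducible characteristic polynomial of $S$ to pick $\sigma_1,\dots,\sigma_d$ with $\sigma_i(\nu)=\nu_i$; the resulting $v_i$ have pairwise distinct $S$-eigenvalues and are therefore automatically linearly independent, giving a diagonalizing basis directly. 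Your variant is slightly cleaner: it bypasses the rational-invariant-subspace argument entirely and uses only irreducibility of the characteristic polynomial, which matches the lemma's stated hypothesis exactly (the paper's proof as written appeals to total irreducibility at that point, though irreducibility would suffice there too). Your handling of the one genuinely delicate point, namely that $v$ may be rescaled to lie in $\Q(\nu)^d$ because the $\nu$-eigenspace is one-dimensional and defined over $\Q(\nu)$, is also correct and is stated more carefully than in the paper.
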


\begin{proof}
Denote with $K_S$ and $K_T$ the field extensions of $\Q$ such that the characteristic polynomials of $S$ and $T$ are split. With $K$ we denote the smallest common field extension of $K_S$ and $K_T$. 

Let $v$ be the common eigenvector with its respective eigenvalues $\lambda_S$ and $\lambda_T$ satisfying $Sv = \lambda_S v$ and $Tv = \lambda_T v$. We may also choose the eigenvector $v$ to be algebraic with\footnote{In fact it is not hard to see that one can choose~$v\in(K_S\cap K_T)^d$.} $v \in  K^d$. 

The field extension $K\mid\Q$ is algebraic and Galois. We denote with $\sigma_1, \dots, \sigma_n$ the Galois automorphisms of $K\mid\Q$.

Each $\sigma_i$ we can apply to the equation $Sv = \lambda_S v$ getting $S \sigma_i(v) = \sigma_i(\lambda_S) \sigma_i(v)$ and $T \sigma_i(v) = \sigma_i(\lambda_T) \sigma_i(v)$. Of course $S,T$ have integer entries and remain unchanged. So we know that $\sigma_i(v)$ is a common eigenvector of $S$ and $T$ for every $1 \leq i \leq n$. 

We define $W = \text{span} \{ \sigma_1 (v), \dots, \sigma_n (v) \}$. When we apply $\sigma_i$ to $W$, those vectors get permuted, so $W$ is invariant under all Galois automorphisms. This is only possible if $W$ is a rational subspace. Every $\sigma_i (v)$ is an eigenvector of $S$, so that we know that $W$ must be invariant under $S$. Now $S$ is totally irreducible and $W$ rational, therefore $W = \R^d$. 
So we can choose a basis out of the generating set $\{v, \sigma_1 (v), \dots, \sigma_n (v) \}$ which is a basis of $\R^d$ in which $S$ and $T$ are both simultaneously diagonal and therefore commute.
\end{proof}

We note that the above already proves some cases of Theorem~\ref{conjecture}.

\subsection{The general case}
Now we assume $V_S^{0-} = V_T^{0-}$ and $\dim V_S^{0-} > 1$. We will 
reduce this case to the case above.

As in the above lemma
we do not care in the following algebraic argument that those subspaces are weak stable.
The only important assumption is that we have a nontrivial subspace which is simultaneously invariant under $S$ and $T$.

\begin{lemma}\label{lemmadivisible}
Let~$S$ and~$T$ be two totally irreducible integer matrices.
Among the nontrivial subspaces simultaneously invariant under $S$ and $T$ there is one with a minimal dimension $p$, and the dimension of all other such subspaces is divisible by $p$.
\end{lemma}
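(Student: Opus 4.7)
The plan is to pass from the subspaces in question to modules over the $\Q$-algebra $\mathcal{A}=\Q[S,T]\subseteq \Mat_d(\Q)$ and its real extension $\mathcal{A}_\R=\mathcal{A}\otimes_\Q \R$, and then use Wedderburn--Artin to pin down the possible dimensions of the $\mathcal{A}_\R$-submodules of $\R^d$.

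First I would check that $\Q^d$ is a simple $\mathcal{A}$-module: any $\mathcal{A}$-submodule of $\Q^d$ is in particular an $S$-invariant rational subspace, hence $\{0\}$ or $\Q^d$ by the opening remark of this section. Thus $\mathcal{A}$ is a finite-dimensional $\Q$-algebra with a faithful simple module, so by Wedderburn--Artin it is simple. The commuting field $K:=\operatorname{End}_\mathcal{A}(\Q^d)$ sits inside the centralizer of $S$ in $\Mat_d(\Q)$, which equals $\Q[S]$; since $S$ is irreducible, $\Q[S]$ is a number field of degree $d$, and so $K$ is itself a number field. Writing $k=[K:\Q]$, the structure theorem gives $\mathcal{A}\cong M_{d/k}(K)$ with $\Q^d\cong K^{d/k}$, so in particular $k\mid d$.

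Next I would extend scalars to $\R$. Writing $K\otimes_\Q \R\cong \prod_{i=1}^r L_i$ with each $L_i\in\{\R,\C\}$ the archimedean completions of $K$ (so $\sum_i[L_i:\R]=k$), we obtain
\[
  \mathcal{A}_\R\;\cong\;\prod_{i=1}^r M_{d/k}(L_i),\qquad \R^d\;\cong\;\bigoplus_{i=1}^r V_i,
\]
where $V_i\cong L_i^{d/k}$ is the unique simple module for the $i$-th factor and has real dimension $(d/k)\,[L_i:\R]$. A real subspace of $\R^d$ is simultaneously $S$- and $T$-invariant precisely when it is an $\mathcal{A}_\R$-submodule, and by the product structure such submodules are exactly the partial sums $\bigoplus_{i\in J}V_i$ for $J\subseteq\{1,\dots,r\}$. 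Consequently every simultaneously invariant subspace has real dimension $(d/k)\sum_{i\in J}[L_i:\R]$.

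The minimum over nonempty $J$ is $p=(d/k)\cdot\min_i[L_i:\R]$, equal to $d/k$ if $K$ has a real embedding and $2d/k$ otherwise. In the first case each $[L_j:\R]$ lies in $\{1,2\}$, so the dimension is a positive integer multiple of $d/k=p$; in the second case all $[L_j:\R]=2$, so the dimension equals $2(d/k)|J|=p|J|$. Either way $p$ divides the dimension of every invariant subspace, which is the lemma. The only substantive step is recognizing $\mathcal{A}$ as a simple algebra whose commuting field is commutative, so that Wedderburn--Artin applies and the real decomposition above is available; after that the divisibility is a bookkeeping count of the real and complex places of $K$.
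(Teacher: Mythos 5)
Your argument is correct, but it follows a genuinely different route from the paper's. The paper's proof is elementary: it takes a minimal nontrivial invariant subspace $V_0$ of dimension $p$ (over $\overline{\Q}$), observes that each Galois conjugate $\sigma_i(V_0)$ is again invariant and, by minimality, either coincides with or meets trivially any previously accumulated invariant sum; summing all conjugates produces a rational $S$-invariant subspace, hence $\R^d$, giving $p\mid d$, and then adjoining the conjugates one at a time to an arbitrary invariant $V$ of dimension $q$ yields $d=q+\ell p$ and hence $p\mid q$. You instead identify the invariant subspaces with submodules over $\mathcal{A}=\Q[S,T]$, use the faithful simple module $\Q^d$ to see that $\mathcal{A}$ is simple with commutative endomorphism field $K\subseteq\Q[S]$, and read off all real invariant subspaces from the decomposition $\mathcal{A}\otimes_\Q\R\cong\prod_i M_{d/k}(L_i)$ over the archimedean places of $K$. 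All the steps check out: the Wedderburn--Artin reduction, the identification of $\operatorname{End}_{\mathcal{A}}(\Q^d)$ inside the centralizer $\Q[S]$ of the non-derogatory matrix $S$, the multiplicity-one structure of $\R^d\cong\bigoplus_i L_i^{d/k}$ which forces every $\mathcal{A}_\R$-submodule to be a partial sum $\bigoplus_{i\in J}V_i$, and the final bookkeeping with $[L_i:\R]\in\{1,2\}$. Your approach buys more than the lemma asks for --- a complete classification of the simultaneously invariant real subspaces, and the identification of $p$ as $d/k$ or $2d/k$ according to whether $K$ has a real place --- and it works directly with real subspaces, whereas the paper's proof nominally works in $\overline{\Q}^d$ and is more self-contained, using nothing beyond Galois conjugation and dimension counting.
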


\begin{proof}
For this proof, denote with $\mathcal{I}$ the set of subspaces of $\overline{\Q}^d$ which are invariant under $S$ and $T$. This set is closed under taking intersections and sums. 
Let $V_0 \in \mathcal{I}$ be such that $\dim (V_0) = p>0$ is minimal. 

Let~$K\mid\Q$ be a Galois field extension over which both~$S$ and~$T$ are diagonalizable and 
let~$\{\sigma_1,\ldots,\sigma_n\}$ be the Galois group of~$K$ over~$\Q$. 
These automorphisms map elements of $\mathcal{I}$ to elements of $\mathcal{I}$ of the same dimension. 
Now consider $\sigma_i ( V_0)$ and notice that either  $\sigma_i ( V_0)=V_0$, or $\sigma_i ( V_0)$ will intersect $V_0$ trivially. 

The sum $W=\sigma_1 (V_0) + \dots + \sigma_n (V_0)$ is invariant under all Galois automorphisms, hence a rational subspace,  and clearly invariant unter the totally irreducible $S$, hence it can only be $\R^d$. Adding these subspaces step by step, in each step either $\sigma_i (V_0)$ is contained in the sum of the preceeding subspaces, or it intersects the previous sum trivially and the dimension of the sum increases exactly by $p$.
Therefore~$p\mid d$.

Next we start with an arbitrary subspace $V\in\mathcal{I}$ of dimension $q$. As before we add consecutively the subspaces $\sigma_1 (V_0), \dots, \sigma_n (V_0)$ to it. Again in every step minimality of~$V_0$
implies that either $\sigma_k(V_0)$ is already contained in the previous sum or it intersects it trivially. Hence we see by induction that for all~$1\leq k\leq n$
\[
 V+\sigma_1(V_0)+\cdots+\sigma_k(V_0)=V\oplus\textstyle\bigoplus_j'\sigma_j(V_0),
\]
where $\bigoplus_j'$ denotes the direct sum over some of the indices~$j\in\{1,\ldots,k\}$.

For $k=n$ this implies that
\[
 d=\dim\Q^d=\dim\left(V+\sigma_1(V_0)+\cdots+\sigma_n(V_0)\right)
=q+\ell p
\]
for some~$\ell\leq n$. Since~$p\mid d$ this gives the lemma.
\end{proof}

\begin{proof}[Proof of Theorem~\ref{conjecture}]
  Let $S,T$ be totally irreducible automorphisms of $\T^d$ satisfying $V_{S}^{0-} = V_{T}^{0-}$ and
 $\gcd(d, \dim V_S^{0-}) = 1$. By Lemma~\ref{lemmadivisible} there exists a common eigenvector
for~$T$ and~$S$, and by Lemma~\ref{firstlemma} the maps $T$ and $S$ commute.
\end{proof}

\subsection{An example}

What happens when $\min_{V \in I \backslash \{0\}} \dim V >  1$? We now give an example to show that in this case there exist $S$ and $T$ which do not commute but are nonetheless algebraically related.

Let $d = 4 = 2 \cdot 2$. We construct an example acting on the space $\Q^4 = \Q^2 \otimes \Q^2$.

Let $A = \begin{pmatrix} 2 & 3 \\ 1 & 2  \end{pmatrix}$ and $B = \begin{pmatrix} 2 & 5 \\ 1 & 2  \end{pmatrix} \in \GL_2(\Z)$ with eigenvalues $2 \pm \sqrt{3}$ and $2 \pm \sqrt{5}$. We note that $A$ and $B$ do not commute and that $K_A = \Q(\sqrt{3})$ and $K_B = \Q(\sqrt{5})$ are the field extensions such that the characteristic polynomials split into completely. 

We also consider the linear map 
$Q=\begin{pmatrix} 1 & 2 \\ 1 & 1 \end{pmatrix}$ 
acting on $\Q^2$ with eigenvalues $1 \pm \sqrt{2}$. Let $\lambda_{A,i}$, $\lambda_{B,i}$ denote the eigenvalues of $A$ and $B$ for~$i=1,2$. Now we choose $k \in \N$ such that $\lambda_{A,i}  (1+\sqrt{2})^k > 1, \lambda_{B,i}  (1+\sqrt{2})^k > 1, \lambda_{A,i}  (1-\sqrt{2})^k < 1$ and $\lambda_{B,i}  (1-\sqrt{2})^k < 1$ for~$i=1,2$. 
A short calculation shows that we can take $k=2$ and calculate
$Q^2=\begin{pmatrix} 3 & 4 \\ 2 & 3  \end{pmatrix}$.
Then we define $S = A\otimes Q^2$ and $T = B \otimes Q^2$ which we identify with the $4$-by-$4$ matrices 
$$S = \begin{pmatrix} 3A & 4A \\ 2A & 3A  \end{pmatrix} = \begin{pmatrix} 6 & 9 & 8 & 12 \\ 3 & 6 & 4 & 8 \\ 4 & 6 & 6 & 9 \\ 2 & 4 & 3 & 6  \end{pmatrix}\text{ and } T = \begin{pmatrix} 3B & 4B \\ 2B & 3B  \end{pmatrix} = \begin{pmatrix} 6 & 15 & 8 & 20 \\ 3 & 6 & 4 & 8 \\ 4 & 10 & 6 & 15 \\ 2 & 4 & 3 & 6  \end{pmatrix}.$$

We note that the eigenvalues of~$S$ are~$(2\pm\sqrt{3})(1\pm\sqrt{2})^2$ and of~$T$ are~$(2\pm\sqrt{5})(1\pm\sqrt{2})^2$, which implies that both~$S$ and~$T$ are totally irreducible.

Next note that
the eigenvectors for~$S$ are given by
\begin{align*}
 v^+_{1,2} &= \begin{pmatrix} \pm \sqrt{6} \\ \sqrt{2} \\ \pm \sqrt{3} \\ 1 \end{pmatrix} \text{ (expanding) and }\\
 v^-_{3,4} &= \begin{pmatrix} \mp \sqrt{6} \\ -\sqrt{2} \\ \pm \sqrt{3} \\ 1 \end{pmatrix} \text{  (contracting).}
\end{align*}

For $T$ the eigenvectors are given by a similar expression (replacing~$\sqrt{3}$ by~$\sqrt{5}$)
and from this one can easily see 
\[
 V_S^{0-} = V_T^{0-} = \operatorname{span} \left(\begin{pmatrix} -\sqrt{2} \\ 0 \\ 1 \\ 0 \end{pmatrix}, \begin{pmatrix} 0 \\ -\sqrt{2} \\ 0 \\ 1 \end{pmatrix}\right).
\]
Finally we note that~$S=A\otimes Q^2$ and~$T=B\otimes Q^2$ do not commute since~$A$ and~$B$ do not commute.

This shows that Theorem~\ref{conjecture} cannot hold in full generality (i.e.\ without the condition $\gcd(d, \dim V_S^{0-}) = 1$ or something similar).
Instead of commutativity of the maps we have another   quite strong algebraic condition: If we compare the two field extensions $K_S$ and $K_T$ of $\Q$ such that the characteristic polynomials of $S$ and $T$ are split, we have $K_S = \Q(\sqrt{2}, \sqrt{3})$ and $K_T = \Q(\sqrt{2}, \sqrt{5})$, in particular $K_S \cap K_T = \Q(\sqrt{2}) \supsetneq \Q$.

\subsection{Concluding remarks}
To summarize we have shown in the case where the weak stable subspaces of $S$ and $T$ are not identical 
that $\dim (Eq(S) \cap ND(T)) = d$. On the other hand if the weak stable subspaces are identical
but their dimension is coprime to $d$ we have that $S$ and $T$ commute. In this case we can refer to the work of Bergelson, the first named author and Tseng \cite{BET}, which gives a related but weaker conclusion assuming (as is necessary)
that $T$ and $S$ are multiplicatively independent. The above example, however, fits with neither of the two
settings and it would be interesting to see how to extend the argument to that case.


\begin{thebibliography}{99}
\bibitem{BET} V. Bergelson, M. Einsiedler, J. Tseng, \emph{Simultaneous dense and nondense orbits for commuting maps}, Preprint, arXiv:1309.4823v1.






\bibitem{EW} M. Einsiedler and W. Ward, \emph{Ergodic Theory with a view towards Number Theory}, Springer GTM 259  (2011).



\bibitem{F1} L. Fishman, \textit{Schmidt's games, badly approximable matrices, and fractals}, J. Number Th. 129(2009), 2133-2153.

\bibitem{F2}\bysame, \textit{Schmidt's game on fractals}, Israel J. Math. 171 (2009), 77-92.




\bibitem{KM} D. Y. Kleinbock, G. A. Margulis, \emph{Bounded orbits of nonquasiunipotent flows on homogeneous spaces}, Amer. Math. Soc. Transl. (2) \textbf{171} (1996), 141-172.  





\bibitem{LM} B. Lytle and A. Maier, \emph{Simultaneous dense and nondense orbits for noncommuting toral endomorphisms}, Preprint, arXiv:1404.4014.

\bibitem{CE} J. Chaika and A. Eskin, \emph{Every flat surface is Birkhoff and Oseledets generic in almost every direction}, Preprint, arXiv:1305.1104.

\bibitem{S} W. M. Schmidt, \emph{On badly approximable numbers and certain games}, Trans. A.M.S. \textbf{123} (1966), 27-50.


\end{thebibliography}
\end{document}